\newtheorem{theorem}{Theorem}[section]
\newtheorem{lemma}[theorem]{Lemma}
\newtheorem{proposition}[theorem]{Proposition}
\newtheorem{corollary}[theorem]{Corollary}
\newtheorem{conjecture}[theorem]{Conjecture}
\theoremstyle{definition}
\newtheorem{example}[theorem]{Example}
\numberwithin{equation}{section}
\def\nk0t{\|\tilde k_0^\theta\|^{-2}}
\def\kda{K^2_\alpha}
\def\kdt{K^2_\theta}
\def\b1{\mathcal{B}_1(\kdt,\kda)}
\begin{document}
\title[Toeplitz operators and conjugations]{Characterization of truncated Toeplitz operators\\ by conjugations}
\thanks{The research of the first and the third authors was financed by the Ministry of Science and Higher Education of the Republic of Poland}

\author{Kamila Kli\'s-Garlicka, Bartosz \L anucha and Marek Ptak}

\address{Kamila Kli\'s-Garlicka, Institute of Mathematics,
University of Agriculture, Balicka 253c,30-198 Krakow, Poland}
\email{rmklis@cyfronet.pl}

\address{Bartosz \L anucha, Department of Mathematics,  Maria Curie-Sk\l odowska University, Maria Curie-Sk\l o\-dow\-ska Square 1, 20-031 Lublin, Poland}
\email{bartosz.lanucha@poczta.umcs.lublin.pl}

\address{Marek Ptak, Institute of Mathematics,
University of Agriculture, Balicka 253c, 30-198 Krakow, Poland  and
Institute of Mathematics, Pedagogical University, ul.
Podcho\-r\c{a}\.{z}ych 2, 30-084 Krak\'ow, Poland}
\email{rmptak@cyfronet.pl}


\date{12 December 2016}
\keywords{conjugation, $C$-symmetric operators, model space, truncated Toeplitz operator, Blaschke product, inner function}

\subjclass{Primary 47B35, Secondary 47B32, 30D20}

\begin{abstract}
Truncated Toeplitz operators are C--symmetric with respect to the canonical conjugation given on an appropriate model space. However, by considering only one conjugation one cannot characterize truncated  Toeplitz operators.  It will be proved, for some classes of inner functions and the model spaces connected with them, that if an operator on a model space is C--symmetric for a certain family of conjugations in the model space, then is has to be truncated Toeplitz. A characterization of classical Toeplitz operators is also presented in terms of conjugations.
\end{abstract}
\maketitle

\section{Introduction}
Let $\mathcal{H}$ denote a complex Hilbert space. Denote by $L(\mathcal{H})$ the algebra of all bounded linear operators on $\mathcal{H}$.  A {\it conjugation} is an antilinear involution $C\colon \mathcal{H}\to\mathcal{H}$ such that $\langle Cf,Cg\rangle=\langle g,f\rangle$ for all $f, g\in \mathcal{H}$. An operator $A\in L(\mathcal{H})$ is called {\it C--symmetric} if $CAC=A^*$.

Let $\mathbb{D}$ denote the open unit disk, let $\mathbb{T}=\partial\mathbb{D}$ denote the unit circle and let $m$ be the normalized Lebesgue measure on $\mathbb{T}$. Denote by $L^2$ the space $L^2(\mathbb{T},m)$ and by $L^\infty=L^\infty(\mathbb{T},m)$. Recall that a  classical {\it Toeplitz operator} $T_\varphi$ with a symbol $\varphi\in L^\infty$ on the Hardy space $H^2$ is given by the formula
$$T_\varphi f=P(\varphi f) \text{ for } f\in H^2,$$
where $P\colon L^2\to H^2$ is the orthogonal projection. Denote by $\mathcal{T}$ the set of all Toeplitz operators, i.e., $\mathcal{T}=\{T_\varphi : \varphi \in L^\infty\}$.

Let $\theta$ be a nonconstant inner function. Consider the so-called {\it model space} $\kdt=H^2\ominus\theta H^2$ and the orthogonal projection $P_\theta\colon L^2\to\kdt$.  A {\it truncated Toeplitz operator} $A^\theta_\varphi$ with a symbol $\varphi\in L^2$ is defined as
$$A^\theta_\varphi\colon D(A^\theta_\varphi)\subset K^2_\theta\to K^2_\theta; \quad A^\theta_\varphi f=P_\theta(\varphi f)$$
for $f\in D(A^\theta_\varphi)=\{f\in K^2_\theta : \varphi f\in L^2\}$. Denote by $\mathcal{T}(\theta)$ the set of all bounded truncated Toeplitz operators on $\kdt$.

The conjugation $C_\theta$ defined for $f\in L^2$ by the formula
$$C_\theta f(z)=\theta(z) \overline{zf(z)},\quad |z|=1,$$
is a very useful tool  in investigating Toeplitz operators. In fact, all truncated Toeplitz operators are $C_\theta$--symmetric \cite{GP}.

Truncated Toeplitz operators have been recently strongly investigated (see for instance \cite{Sarason,BCT, BCKP, CCJP1, CRW, GMR,  lanucha}). However, usually only one (canonical) conjugation was involved in analysis on these operators.
In this paper we suggest to consider a family of conjugations to study Toeplitz operators. In particular,
we give a characterization of the classical Toeplitz operators as well as some special cases of truncated Toeplitz operators  using conjugations.

It is easy to see that if $\theta=z^N$, then $\kdt=\mathbb{C}^N$. The natural conjugation $C_N=C_{z^N}$ in $\mathbb{C}^N$ can be expressed as $C_N(z_1,\dots, z_N)=(\bar z_N,\dots, \bar z_1)$. Note that a matrix $(a_{i,j})_{i,j=1,\dots,N}$ is $C_N$--symmetric if and only if it is symmetric with respect to the second diagonal, i.e.,
$$a_{i,j}=a_{N-j+1,N-i+1}\quad\text{for}\quad i,j=1,\dots,N.$$
 On the other hand, a finite matrix $(a_{i,j})_{i,j=1,\dots,N}$ is a Toeplitz matrix if and only if it has constant diagonals, that is,
 $$a_{i,j}=a_{k,l}\quad\text{if}\quad i-j=k-l.$$
 Hence, as D. Sarason in \cite{Sarason} observed, each $N\times N$ Toeplitz matrix is $C_N$--symmetric but the reverse implication is true only if $N\leqslant 2$. However, one can notice that for a given matrix $(a_{i,j})_{i,j=1,\dots,N}$, if the matrix is $C_n$--symmetric for every $n\leqslant N$, i.e.,
$$a_{i,j}=a_{n-j+1,n-i+1} \text{ for } n\leqslant N \text{ and } i,j=1,\dots n,$$
then the matrix $(a_{i,j})_{i,j=1,\dots,N}$ has to be Toeplitz. Corollary \ref{zn} gives a precise proof of this fact.  One can ask if a similar property can be obtained for other  inner functions than $\theta= z^N$.
Using known matrix descriptions \cite{CRW, GMR, lanucha} we obtained
the positive answer: for Blaschke product with  a single zero in Section 3, for a finite Blaschke product with distinct zeros  in
 Section 4 (the most demanding case), for an infinite Blaschke product with uniformly separated zeros in Section 5. For a general case  we put the conjecture in Section 6. However,  even for the simplest singular inner function $\theta(z)=\exp(\frac{z+1}{z-1})$  no similar description is known and to solve the conjecture probably a different approach is needed. In Section 2 we also give  similar characterization of the classical Toeplitz operators on the Hardy space in terms of conjugations.

\section{Characterization of Toeplitz operators by conjugations}

Let $\alpha$ and $\theta$ be two nonconstant inner functions. We say that $\alpha$ divides $\theta$ ($\alpha\leqslant\theta$) if $\bar\alpha\theta$ is an inner function. It is easy to verify that $K_{\alpha}^2\subset K_{\theta}^2$ for every $\alpha\leqslant\theta$.~It is known that truncated Toeplitz operators on $K^2_\theta$ are $C_\theta$--symmetric but this property does not characterize them, i.e., there are $C_\theta$--symmetric operators on $K^2_\theta$, which are not truncated Toeplitz (\cite{GP}, \cite[Lemma 2.1, Corollary on p.504]{Sarason}). Note however that $A_\varphi^\theta$ is $C_\alpha$--symmetric for every $\alpha\leqslant \theta$. Namely:

\begin{lemma}\label{obciecie}
Let $A_\varphi^\theta\colon K^2_\theta\to K^2_\theta$ be a truncated Toeplitz operator. For every $\alpha \leqslant \theta$ the operator $P_\alpha A_{\varphi|K^2_\alpha}^\theta$ is $C_\alpha$--symmetric.
\end{lemma}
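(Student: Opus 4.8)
The plan is to recognize that the compression $P_\alpha A_\varphi^\theta|_{K^2_\alpha}$ is nothing but the truncated Toeplitz operator $A_\varphi^\alpha$ acting on the smaller model space $K^2_\alpha$, and then to quote the fact (recalled in the text, \cite{GP}) that every truncated Toeplitz operator on $K^2_\alpha$ is $C_\alpha$--symmetric. In this way the whole lemma reduces to an identification of operators, after which no new symmetry computation is needed.

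First I would use the inclusion $K^2_\alpha\subset K^2_\theta$, which holds since $\alpha\leqslant\theta$ (as noted just before the statement). For this nested pair of closed subspaces the orthogonal projections satisfy $P_\alpha P_\theta=P_\alpha$; indeed $(K^2_\theta)^\perp\subset(K^2_\alpha)^\perp$, so $P_\alpha$ annihilates the part of any vector lying in $(K^2_\theta)^\perp$, and projecting first onto the larger space changes nothing. This projection identity is the one point that should be verified carefully.

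With this in hand, I would compute, for $f\in K^2_\alpha$ in the appropriate domain,
$$P_\alpha A_\varphi^\theta f=P_\alpha P_\theta(\varphi f)=P_\alpha(\varphi f)=A_\varphi^\alpha f,$$
so that $P_\alpha A_\varphi^\theta|_{K^2_\alpha}=A_\varphi^\alpha$ as operators on $K^2_\alpha$. Since $A_\varphi^\alpha$ is a truncated Toeplitz operator on $K^2_\alpha$ (its symbol $\varphi$ lies in $L^2$), it is $C_\alpha$--symmetric by \cite{GP}, which is precisely the assertion.

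I expect the only genuine subtlety to be the bookkeeping about domains in the possibly unbounded case: one should check that $f\in K^2_\alpha$ with $\varphi f\in L^2$ makes each step above well defined, and that the equality of operators respects their domains. The symmetry itself is inherited for free from the canonical conjugation on $K^2_\alpha$, so there is no essential obstacle beyond correctly identifying the compressed operator.
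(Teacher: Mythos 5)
Your proposal is correct and follows essentially the same route as the paper: both identify the compression $P_\alpha A^\theta_{\varphi|K^2_\alpha}$ with the truncated Toeplitz operator $A^\alpha_\varphi$ and then invoke the known $C_\alpha$--symmetry of truncated Toeplitz operators (the paper cites \cite[Lemma 2.1]{Sarason}, you cite \cite{GP}; both are standard references for this fact). Your verification of the identification via the nested-projection identity $P_\alpha P_\theta=P_\alpha$ is exactly the detail the paper leaves implicit in its one-line ``Actually, $P_\alpha A_{\varphi|K^2_\alpha}^\theta=A_\varphi^\alpha$''.
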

\begin{proof}
Note that $P_\alpha A_{\varphi|K^2_\alpha}^\theta$ belongs to $\mathcal{T}(\alpha)$. Actually, $P_\alpha A_{\varphi|K^2_\alpha}^\theta=A_\varphi^\alpha$, hence it is $C_\alpha$--symmetric by \cite[Lemma 2.1]{Sarason}.
\end{proof}

A similar argument shows that if $A\in\mathcal{T}$, then $P_\alpha A_{|K^2_\alpha}$ is $C_\alpha$--symmetric for all inner functions $\alpha$. The latter can be used to characterize all Toeplitz operators on $H^2$:

\begin{theorem}\label{toep}
Let $A\in L(H^2)$. Then the following conditions are equivalent:
 \begin{enumerate}[(1)]
 \item $A\in \mathcal{T}$;
 \item  $C_\alpha A_\alpha C_\alpha = A^*_\alpha$ for all nonconstant inner functions $\alpha$, where $A_\alpha = P_\alpha A_{|K^2_\alpha}$;
 \item  $C_\alpha A_\alpha C_\alpha = A^*_\alpha$ for all $\alpha=z^n$, where $A_\alpha = P_\alpha A_{|K^2_\alpha}$.
  \end{enumerate}
\end{theorem}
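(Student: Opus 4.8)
The plan is to establish the cyclic chain of implications $(1)\Rightarrow(2)\Rightarrow(3)\Rightarrow(1)$. The first two implications are comparatively soft, and essentially the whole content lies in $(3)\Rightarrow(1)$, where one must recover the rigid Toeplitz structure of $A$ from the anti-diagonal symmetry of all its finite compressions.

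For $(1)\Rightarrow(2)$ I would argue exactly as in the proof of Lemma \ref{obciecie}. If $A=T_\varphi$ with $\varphi\in L^\infty$, then for any $f\in K^2_\alpha\subset H^2$ one has $P_\alpha A f=P_\alpha P(\varphi f)=P_\alpha(\varphi f)=A^\alpha_\varphi f$, because $P_\alpha=P_\alpha P$ on $L^2$ (the part of $g$ orthogonal to $H^2$ is annihilated by $P_\alpha$, as $K^2_\alpha\subset H^2$). Thus $A_\alpha=P_\alpha A_{|K^2_\alpha}$ coincides with the truncated Toeplitz operator $A^\alpha_\varphi\in\mathcal{T}(\alpha)$, which is $C_\alpha$--symmetric by \cite[Lemma 2.1]{Sarason}. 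The implication $(2)\Rightarrow(3)$ is immediate, since each $z^n$ is a particular nonconstant inner function, so $(2)$ specializes to $(3)$.

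For $(3)\Rightarrow(1)$ I would pass to matrices in the monomial basis. Writing $a_{i,j}=\langle A z^{j-1},z^{i-1}\rangle$ and recalling $K^2_{z^n}=\operatorname{span}\{1,z,\dots,z^{n-1}\}$, a direct computation gives $\langle A_{z^n}v,u\rangle=\langle Av,u\rangle$ for $u,v\in K^2_{z^n}$, so $(a_{i,j})_{i,j=1}^n$ is precisely the matrix of $A_{z^n}$. By the matrix description of $C_{z^n}$--symmetry recalled in the introduction, hypothesis $(3)$ means that for every $n$ and all $i,j\leqslant n$,
$$a_{i,j}=a_{n-j+1,\,n-i+1}.$$
The crucial step is to play two consecutive sizes against each other: fixing $i,j\geqslant 1$ and any $n\geqslant\max(i,j)$, the size-$n$ relation gives $a_{i,j}=a_{n-j+1,n-i+1}$, while the size-$(n+1)$ relation applied to the shifted indices $(i+1,j+1)$ gives $a_{i+1,j+1}=a_{n-j+1,n-i+1}$. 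Hence $a_{i,j}=a_{i+1,j+1}$ for all $i,j$, i.e.\ the matrix of $A$ has constant diagonals. Re-indexing, this is exactly the relation $S^*AS=A$ for the unilateral shift $S=T_z$, and the Brown--Halmos theorem then produces a symbol $\varphi\in L^\infty$ with $A=T_\varphi$, so $A\in\mathcal{T}$.

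I expect the genuine obstacle to be this final passage from ``constant diagonals'' to ``bounded Toeplitz operator'': equal diagonal entries only produce a \emph{formal} Toeplitz matrix, and one needs the boundedness of $A\in L(H^2)$ together with the Brown--Halmos characterization to manufacture an honest $L^\infty$ symbol. The combinatorial heart, by contrast, is the elementary but decisive trick of comparing the anti-diagonal symmetries at sizes $n$ and $n+1$; this is the same mechanism that underlies Corollary \ref{zn}, now run for every $n$ simultaneously so as to force all diagonals of $A$ to be constant.
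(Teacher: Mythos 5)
Your proof is correct and follows essentially the same route as the paper's: both reduce condition (3) to the Brown--Halmos characterization by converting the $C_{z^n}$--symmetry of the compressions $A_{z^n}$ into identities among the matrix entries $a_{i,j}=\langle Az^{j-1},z^{i-1}\rangle$, forcing constant diagonals. The only difference is bookkeeping: the paper applies a single symmetry at size $n=k+l+1$ to equate a first-row (or first-column) entry with an arbitrary entry on the same diagonal, phrased as trace-annihilation of the rank-two operators $1\otimes z^k-z^l\otimes z^{k+l}$ and $z^k\otimes 1-z^{k+l}\otimes z^l$, whereas you compare the symmetries at the two consecutive sizes $n$ and $n+1$ to get $a_{i,j}=a_{i+1,j+1}$ directly; both arguments then finish with the same appeal to Brown--Halmos to produce a bounded symbol.
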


\begin{proof}
The proof of the implication $(1)\Rightarrow (2)$ is similar to the proof of Lemma \ref{obciecie}. Since $(2)\Rightarrow (3)$ is obvious, we will prove now that $(3)\Rightarrow (1)$.

The equivalent condition for a bounded operator on $H^2$ to be Toeplitz is that it has to annihilate all rank-two operators of the form
$$t=z^m\otimes z^r-z^{m+1}\otimes z^{r+1}\quad\text{with}\quad m, r\geq 0,$$
 in the sense that $\operatorname{tr} (A t)=0$ (it follows form the well known Brown--Halmos characterization of Toeplitz operators  given in \cite{BH}). Each such operator can be obtained from $1\otimes z^k-z^l\otimes z^{k+l}$ or $ z^k \otimes 1-z^{k+l}\otimes z^{l}$, with $k, l\geq 0$. Hence our reasoning will be held only for such operators.

Fix $k, l\geq 0$ and let $\alpha=z^n$, $n=k+l+1$. Since
$$C_{\alpha}z^k=z^{n-k-1}=z^l\quad\text{and}\quad C_{\alpha}1=z^{n-1}=z^{k+l},$$
the $C_{\alpha}$--symmetry of $A_{\alpha}$ gives
 \begin{displaymath}
 \begin{split}
\operatorname{tr} (A (1\otimes z^k))&=\langle A 1,z^k \rangle=\langle A_\alpha 1,z^k \rangle=\langle C_\alpha z^k, C_\alpha A_\alpha 1\rangle\\
 &=\langle C_\alpha z^k, A^*_\alpha C_\alpha 1\rangle=\langle z^{l}, A^*_\alpha z^{k+l}\rangle
=\langle A_\alpha z^{l}, z^{k+l}\rangle=\operatorname{tr} (A (z^{l}\otimes z^{k+l})).
 \end{split}
\end{displaymath}
Similarly,
$$\operatorname{tr} (A ( z^k\otimes 1))=\operatorname{tr} (A(z^{k+l}\otimes z^{l})).$$
Therefore all operators of the form $1\otimes z^k-z^{l}\otimes z^{k+l}$, $z^k\otimes 1-z^{k+l}\otimes z^{l}$ for $k, l\geq 0$, are annihilated by $A$. Hence $A$ is Toeplitz.
\end{proof}
 From the previous proof we can obtain
\begin{corollary}\label{zn}
  Let $A\in L(K^2_{z^N})$, $N\in\mathbb{N}$. Then $A\in \mathcal{T}(z^N)$ if and only if for every $1\leqslant n\leqslant N$ the operator $A_n$ is $C_{z^n}$--symmetric, i.e., $C_{z^n}A_nC_{z^n}=A_n^*$, where $A_n=P_{n}A_{|K^2_{z^n}}$ and $P_n\colon K^2_{z^N}\to K^2_{z^n}$ is the orthogonal projection.
\end{corollary}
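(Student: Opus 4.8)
The plan is to establish the two implications separately, deriving the forward one directly from Lemma~\ref{obciecie} and the converse by reusing, inside $K^2_{z^N}$, the trace computation from the proof of Theorem~\ref{toep}. For the direction $A\in\mathcal{T}(z^N)\Rightarrow(\text{symmetry})$, observe that $z^n\leqslant z^N$ for every $1\leqslant n\leqslant N$; hence, writing $A=A_\varphi^{z^N}$, Lemma~\ref{obciecie} applied with $\alpha=z^n$ and $\theta=z^N$ shows that $A_n=P_nA_{|K^2_{z^n}}=A_\varphi^{z^n}$ is $C_{z^n}$--symmetric. This gives one direction with no extra work.

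For the converse I would work with the matrix $a_{i,j}=\langle Az^{j-1},z^{i-1}\rangle$, $i,j=1,\dots,N$, of $A$ in the orthonormal basis $1,z,\dots,z^{N-1}$ of $K^2_{z^N}$. Since $K^2_{z^n}$ is the span of the first $n$ of these vectors, the compression $A_n$ is represented by the top-left $n\times n$ block $(a_{i,j})_{i,j=1}^n$, and, as recalled in the introduction, its $C_{z^n}$--symmetry is precisely the reflection identity
\[
a_{i,j}=a_{n-j+1,\,n-i+1},\qquad i,j=1,\dots,n.
\]
The goal is to deduce from these identities (for all $n\leqslant N$) that $(a_{i,j})$ has constant diagonals, since a finite matrix with constant diagonals is exactly the matrix of a member of $\mathcal{T}(z^N)$.

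The heart of the argument is that the computation in the proof of Theorem~\ref{toep} goes through unchanged within $K^2_{z^N}$. Fixing $k,l\geqslant 0$ with $k+l+1\leqslant N$ and invoking the $C_{z^n}$--symmetry of $A_n$ for $n=k+l+1$, one obtains
\[
a_{k+1,1}=\langle A1,z^k\rangle=\langle Az^{l},z^{k+l}\rangle=a_{k+l+1,\,l+1}
\]
and, by the symmetric (``Similarly'') computation, $a_{1,k+1}=a_{l+1,\,k+l+1}$. The first identity equates the top entry of the sub-diagonal $i-j=k$ with each of its other entries as $l$ varies, and the second does the same for the super-diagonal $i-j=-k$; letting $k$ and $l$ run over all admissible values therefore forces every diagonal of $(a_{i,j})$ to be constant. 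Thus $A$ has a Toeplitz matrix and so $A\in\mathcal{T}(z^N)$.

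The single delicate point --- and the only place where the finiteness of $z^N$ is used --- is the index bookkeeping: one must verify that the index $n=k+l+1$ required to apply the symmetry never exceeds $N$ (it does not, precisely because the highest power $z^{k+l}$ occurring must still belong to $K^2_{z^N}$, i.e.\ $k+l\leqslant N-1$), and that the two families of relations jointly connect the corner entry of each diagonal to all the others. I expect no real obstacle beyond this routine check together with the standard identification, already implicit in the introduction, of $\mathcal{T}(z^N)$ with the set of Toeplitz (constant-diagonal) matrices.
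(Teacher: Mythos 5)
Your proof is correct and takes essentially the same route as the paper: the paper's proof of Corollary \ref{zn} is exactly the remark that the computation in Theorem \ref{toep}, $(3)\Rightarrow(1)$, carried out inside $K^2_{z^N}$ with $n=k+l+1\leqslant N$, equates each diagonal entry $a_{k+l+1,l+1}$ (resp. $a_{l+1,k+l+1}$) with the corner entry $a_{k+1,1}$ (resp. $a_{1,k+1}$), so the matrix has constant diagonals and hence lies in $\mathcal{T}(z^N)$, while the forward direction is Lemma \ref{obciecie}. Your version merely makes explicit the matrix-entry bookkeeping that the paper leaves implicit.
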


\section{The case of a Blaschke product with a single zero}

Let $\alpha$, $\theta$ be any nonconstant inner functions. We say that a unitary operator $U\colon K^2_\theta\to\kda$ defines a spatial isomorphism between $\mathcal{T}(\theta)$ and $\mathcal{T}(\alpha)$ if $U\mathcal{T}(\theta)U^{*}=\mathcal{T}(\alpha)$, that is, $A\in\mathcal{T}(\theta)$ if and only if $UAU^{*}\in\mathcal{T}(\alpha)$. If such $U$ exists, $\mathcal{T}(\theta)$ and $\mathcal{T}(\alpha)$ are said to be spatially isomorphic. The spatial isomorphism  between spaces of truncated Toeplitz operators is discussed in \cite[Chapter 13.7.4]{GMR}.

\begin{proposition}\label{spis}
Let $\alpha$, $\theta$ be any nonconstant inner functions. Let $U\colon K^2_\theta\to\kda$ be such that $U$ defines a spatial isomorphism between $\mathcal{T}(\theta)$ and $\mathcal{T}(\alpha)$. Then $UC_\theta=C_\alpha U$.
\end{proposition}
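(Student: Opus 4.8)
The plan is to transfer both symmetry hypotheses onto a single antilinear operator and then to exploit the richness of $\mathcal{T}(\theta)$. Set $W = U^* C_\alpha U$. First I would check that $W$ is itself a conjugation on $K^2_\theta$: it is antilinear, $W^2 = U^* C_\alpha^2 U = I$, and $\langle Wf, Wg\rangle = \langle C_\alpha Uf, C_\alpha Ug\rangle = \langle Ug, Uf\rangle = \langle g, f\rangle$ because $U$ is unitary and $C_\alpha$ is a conjugation. The goal then becomes to show $W = C_\theta$.

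Next I would produce the key operator identity. Fix $A \in \mathcal{T}(\theta)$. Since truncated Toeplitz operators are $C_\theta$--symmetric, $A^* = C_\theta A C_\theta$; since $U$ defines a spatial isomorphism, $UAU^* \in \mathcal{T}(\alpha)$ is $C_\alpha$--symmetric, so $C_\alpha (UAU^*) C_\alpha = (UAU^*)^* = UA^*U^*$. Substituting $A^* = C_\theta A C_\theta$ and then multiplying on the left by $U^*$ and on the right by $U$ gives
$$ W A W = C_\theta A C_\theta \qquad \text{for every } A \in \mathcal{T}(\theta). $$
Equivalently, the linear operator $S := C_\theta W$ (a product of two conjugations, hence unitary with $S^{-1} = W C_\theta$) satisfies $S A S^{-1} = A$ for all $A \in \mathcal{T}(\theta)$; that is, $S$ commutes with every truncated Toeplitz operator.

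At this point I would pin down $S$. The cleanest route is to test the identity on the rank-one truncated Toeplitz operators $k_\lambda^\theta \otimes \tilde k_\lambda^\theta$ (with $\tilde k_\lambda^\theta = C_\theta k_\lambda^\theta$), using $C(f\otimes g)C = (Cf)\otimes(Cg)$ for a conjugation $C$: the identity forces $W k_\lambda^\theta = \nu_\lambda \tilde k_\lambda^\theta$, and then the conjugation relation $\langle W k_\lambda^\theta, W k_\mu^\theta\rangle = \langle k_\mu^\theta, k_\lambda^\theta\rangle$ combined with $\langle \tilde k_\lambda^\theta, \tilde k_\mu^\theta\rangle = \langle k_\mu^\theta, k_\lambda^\theta\rangle \neq 0$ yields $\nu_\lambda\overline{\nu_\mu} = 1$ for all $\lambda,\mu$, so that $\nu_\lambda$ is a single unimodular constant $\nu$. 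Since the kernels are total and $W,C_\theta$ are antilinear, this gives $W = \nu C_\theta$. (Alternatively, as $\mathcal{T}(\theta)$ contains the compressed shift $A_z^\theta$ and its adjoint $A_{\bar z}^\theta$, and $A_z^\theta$ is irreducible, $S$ lies in $\{A_z^\theta,A_{\bar z}^\theta\}' = \mathbb{C}I$, which likewise forces $W = \nu C_\theta$ with $|\nu|=1$.)

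The main obstacle is the remaining unimodular scalar $\nu$: the argument only delivers $W = \nu C_\theta$, i.e. $UC_\theta = \nu\,C_\alpha U$. This reflects a genuine ambiguity, since replacing $U$ by $\lambda U$ (which preserves the spatial isomorphism) leaves $\mathcal{T}(\alpha)$ fixed but multiplies $W$ by $\bar\lambda^{2}$, rescaling $\nu$ freely. I would therefore normalize the spatial isomorphism, replacing $U$ by $\sqrt{\nu}\,U$, which turns $W$ into $\bar\nu\,\nu\,C_\theta = C_\theta$ and hence produces the asserted intertwining $UC_\theta = C_\alpha U$. Isolating and disposing of this phase is the only delicate point; the rest is formal manipulation of conjugations and rank-one operators.
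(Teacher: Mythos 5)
Your argument is a genuinely different, essentially self-contained route. The paper's proof is a citation argument: it invokes the classification of spatial isomorphisms between $\mathcal{T}(\theta)$ and $\mathcal{T}(\alpha)$ into three basic types of unitaries \cite[Ch.~13.7.4]{GMR}, quotes the intertwining property for one of these types from \cite[Lemma 13.1]{Sarason}, asserts the other two are similar, and composes. You instead work intrinsically with the given $U$: the conjugation $W=U^{*}C_\alpha U$, the identity $WAW=C_\theta A C_\theta$ for all $A\in\mathcal{T}(\theta)$ (which uses only the $C_\theta$-- and $C_\alpha$--symmetry of truncated Toeplitz operators and the inclusion $U\mathcal{T}(\theta)U^{*}\subseteq\mathcal{T}(\alpha)$), and then triviality of the commutant of $\mathcal{T}(\theta)$, either via the rank-one truncated Toeplitz operators $k^\theta_\lambda\otimes\tilde k^\theta_\lambda$ (with $\tilde k^\theta_\lambda=C_\theta k^\theta_\lambda$) or via irreducibility of the compressed shift $A^\theta_z$; both versions are legitimate, and the nonvanishing $\langle k^\theta_\mu,k^\theta_\lambda\rangle\neq 0$ that your kernel computation needs does hold, since $|\theta|<1$ on $\mathbb{D}$. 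These steps correctly yield $UC_\theta=\nu C_\alpha U$ for a single unimodular constant $\nu$. What your approach buys is independence from the classification theorem; what it appears to lose is only this phase.

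The delicate point is your last step, and you should be clear-eyed about it: replacing $U$ by $\sqrt{\nu}\,U$ proves the intertwining for a \emph{different} unitary, not for the $U$ you were given, so strictly speaking your argument does not prove the proposition as stated. However, no argument can, because the proposition as stated is false: if $U$ defines a spatial isomorphism, then so does $iU$ (the map $A\mapsto UAU^{*}$ is unchanged), yet $U$ and $iU$ cannot both intertwine, since $(iU)C_\theta=i\,UC_\theta$ while $C_\alpha(iU)=-i\,C_\alpha U$ by antilinearity. Thus the unimodular constant in your conclusion is unavoidable, and $UC_\theta=\nu C_\alpha U$ with $|\nu|=1$ is the correct form of the statement. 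The paper's own proof harbors the same phase: the cited classification determines the implementing unitary only up to a unimodular scalar (indeed, by exactly your commutant argument, two unitaries implementing the same spatial isomorphism differ by such a scalar), so the assertion that every such $U$ \emph{is} a composition of the three basic types cannot be taken literally. Fortunately nothing downstream is damaged: symmetry with respect to $C_\alpha$ and with respect to $\nu C_\alpha$ are the same condition, because the phases cancel in $CAC=A^{*}$; and in the paper's only application, Proposition \ref{b1}, the operator $U_{b_a}$ is a concrete basic-type unitary for which the intertwining can be verified directly. So your proof, read as establishing the phase-corrected statement, is sound and in fact sharper than the paper's.
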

\begin{proof}

It is known \cite[Chapter 13.7.4]{GMR} that there are three basic types of unitary operators that define a spatial isomorphism between $\mathcal{T}(\theta)$ and $\mathcal{T}(\alpha)$. The requested intertwining property for one of those basic types is proved in \cite[Lemma 13.1]{Sarason}. The proof for two other types is similar. Since every $U\colon K^2_\theta\to\kda$ such that $U$ defines a spatial isomorphism between $\mathcal{T}(\theta)$ and $\mathcal{T}(\alpha)$, is a composition of at most three of those basic types of operators, it follows that $U$ also has this intertwining property.
\end{proof}

Let $a\in \mathbb{D}$ and $N\in\mathbb{N}$. Denote $b_a(z)=\frac{z-a}{1-\bar az}$.

\begin{proposition}\label{b1}
Let $A\in L(K^2_{b_a^N})$. Then $A\in\mathcal{T}(b_a^N)$ if and only if for every $1\leqslant n\leqslant N$ the operator $A_n$ is $C_{b_a^n}$--symmetric, i.e., $C_{b_a^n}A_nC_{b_a^n}=A_n^{*}$, where $A_n=P_nA_{|K^2_{b_a^n}}$ and $P_n\colon K^2_{b_a^N}\to K^2_{b_a^n}$ is the orthogonal projection.
\end{proposition}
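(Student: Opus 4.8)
The plan is to transport the whole problem from $K^2_{b_a^N}$ to $K^2_{z^N}$ by a single spatial isomorphism and then invoke Corollary \ref{zn}. The crucial observation is the identity $b_a^n=z^n\circ b_a$, valid for every $n$. It shows that the weighted composition operator
$$Uf(z)=\frac{\sqrt{1-|a|^2}}{1-\bar az}\,f(b_a(z)),$$
which is unitary on $L^2$ and satisfies $U(\theta H^2)=(\theta\circ b_a)H^2$ and $U(H^2)=H^2$, carries $K^2_{z^n}=H^2\ominus z^nH^2$ onto $K^2_{z^n\circ b_a}=K^2_{b_a^n}$. The point is that \emph{one and the same} operator $U$ does this for every $1\leqslant n\leqslant N$. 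Each restriction $U_n:=U_{|K^2_{z^n}}\colon K^2_{z^n}\to K^2_{b_a^n}$ is one of the basic weighted-composition spatial isomorphisms between $\mathcal{T}(z^n)$ and $\mathcal{T}(b_a^n)$ described in \cite[Chapter 13.7.4]{GMR}, so Proposition \ref{spis} applies at every level and yields $U_nC_{z^n}=C_{b_a^n}U_n$.

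First I would record two consequences of $U_nK^2_{z^n}=K^2_{b_a^n}$. Since $U$ is unitary and carries $K^2_{z^n}$ onto $K^2_{b_a^n}$, it carries the orthogonal complement onto the orthogonal complement, hence $U\tilde P_n=P_nU$, where $\tilde P_n\colon K^2_{z^N}\to K^2_{z^n}$ and $P_n\colon K^2_{b_a^N}\to K^2_{b_a^n}$ are the orthogonal projections. Writing $B=U^*AU$, this intertwining gives
$$B_n=\tilde P_n(U^*AU)_{|K^2_{z^n}}=U_n^*P_nA_{|K^2_{b_a^n}}U_n=U_n^*A_nU_n.$$
Moreover, since $U$ defines a spatial isomorphism between $\mathcal{T}(z^N)$ and $\mathcal{T}(b_a^N)$, we have $A\in\mathcal{T}(b_a^N)$ if and only if $B\in\mathcal{T}(z^N)$.

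Next I would translate the symmetry condition level by level. From $U_nC_{z^n}=C_{b_a^n}U_n$ and unitarity one gets $U_n^*C_{b_a^n}U_n=C_{z^n}$, and substituting $B_n=U_n^*A_nU_n$ yields
$$C_{z^n}B_nC_{z^n}=U_n^*\bigl(C_{b_a^n}A_nC_{b_a^n}\bigr)U_n,\qquad B_n^*=U_n^*A_n^*U_n.$$
Hence $B_n$ is $C_{z^n}$--symmetric if and only if $A_n$ is $C_{b_a^n}$--symmetric. Chaining the three equivalences — $A\in\mathcal{T}(b_a^N)\Leftrightarrow B\in\mathcal{T}(z^N)$, then Corollary \ref{zn} applied to $B$ (so that $B\in\mathcal{T}(z^N)$ if and only if each $B_n$ is $C_{z^n}$--symmetric), and finally the level-wise translation just obtained — closes the proof.

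The main obstacle I expect is the very first step: producing one unitary $U$ that simultaneously implements spatial isomorphisms at all levels $n\leqslant N$ and respects the nested chain $K^2_{z^1}\subset\dots\subset K^2_{z^N}$, so that $U\tilde P_n=P_nU$ for every $n$. This is exactly where the special form $b_a^n=z^n\circ b_a$ is indispensable, since it guarantees that the single automorphism $b_a$, and hence the single weighted composition operator $U$, does the job for every $n$ at once; for a general inner function no such compatible family need exist, which is why the present argument is tied to the Blaschke product $b_a^N$ with a single zero.
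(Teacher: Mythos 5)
Your proposal is correct and follows essentially the same route as the paper: the paper's proof also uses the single weighted composition operator $U_{b_a}$ as a spatial isomorphism between $K^2_{z^n}$ and $K^2_{b_a^n}$ for every $n\leqslant N$, invokes Proposition \ref{spis} to intertwine $C_{z^n}$ with $C_{b_a^n}$, and then applies Corollary \ref{zn}. You merely spell out the details the paper leaves implicit — the projection intertwining $U\tilde P_n=P_nU$, the identity $B_n=U_n^*A_nU_n$, and the level-wise equivalence of $C$--symmetry — which makes the argument more self-contained but not different in substance.
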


\begin{proof}
The operator $U_{b_a}$ given by
$$U_{b_a}f(z)=\frac{\sqrt{1-|a|^2}}{1-\bar az}f\circ b_a(z)$$
defines a spatial isomorphism between $\mathbb{C}^n=K^2_{z^n}$ and $K^2_{b_a^n}$ for each $n=1,\dots, N$ (see \cite[chapter 13.7.4(i)]{GMR}). By Proposition \ref{spis}, $U_{b_a}$ intertwines the conjugations $C_{z^n}$ and $C_{b_a^n}$. Application of Corollary \ref{zn} finishes the proof.
\end{proof}

\section{The case of a finite Blaschke product with distinct zeros}

Let $B$ be a finite Blaschke product of degree $N$ with distinct zeros $a_1,\dots, a_N$,
\begin{equation}\label{bla}
B(z)=e^{i\gamma}\prod_{j=1}^{N}\frac{z-a_j}{1-\bar {a}_jz},
\end{equation}
where $\gamma\in\mathbb{R}$.
As usual, for $w\in \mathbb{D}$ by
$$k^B_w(z)=\frac{1-\overline{B(w)}B(z)}{1-\bar w z}$$
we denote the reproducing kernel for $K^2_B$, that is,
$$f(w)=\langle f, k^B_w\rangle$$ for $f\in K^2_B$. Note that for $j=1,\dots, N$ we have
\begin{equation}\label{kr}
k_j(z):=k^B_{a_j}(z)=\frac{1}{1-\bar a_j z}.
\end{equation}
As it was observed in \cite{CRW}, the model space $K^2_B$ is $N$--dimensional and the functions $k_1,\dots,k_N$ form a
(non--orthonormal) basis for $K^2_B$.

A simple computation gives the following.

\begin{lemma}[\cite{CRW}, p.5]\label{tech}
\

\begin{enumerate}[(1)]
\item $(C_B k_j)(z)=\frac{B(z)}{z-a_j}$ for $j=1,\dots, N$.
\item $\langle C_B k_j,k_i\rangle=\left\{\begin{array}{cc}0 &\text{for } i\ne j,\\ B'(a_j) &\text{for } i=j.\end{array}\right.$
\item $\langle k_j, k_i \rangle=\frac{1}{1-\bar a_ja_i}$.
\end{enumerate}
\end{lemma}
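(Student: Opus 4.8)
The plan is to establish (3) from the reproducing property, to compute (1) directly on the boundary, and then to derive (2) by feeding (1) back into the reproducing property. Everything here is routine, so I would keep the computations short and flag the single place where care is needed.

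For (3), since $k_j = k^B_{a_j}$ and $k_i = k^B_{a_i}$, the reproducing property $f(w) = \langle f, k^B_w\rangle$ gives $\langle k_j, k_i\rangle = \langle k^B_{a_j}, k^B_{a_i}\rangle = k^B_{a_j}(a_i)$, and the explicit formula \eqref{kr} evaluates this immediately to $\frac{1}{1-\bar a_j a_i}$. For (1), I would work on $\mathbb{T}$ using the defining formula $C_B f(z) = B(z)\overline{zf(z)}$. Substituting $k_j(z) = (1-\bar a_j z)^{-1}$ and using $\bar z = z^{-1}$ for $|z| = 1$, a one-line simplification of $B(z)\,\overline{z(1-\bar a_j z)^{-1}}$ collapses to $\frac{B(z)}{z-a_j}$. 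Because $a_j$ is a (simple) zero of $B$, the right-hand side is a rational function with no pole in $\overline{\mathbb{D}}$, hence lies in $H^2$; since $C_B$ maps $K^2_B$ into itself, it in fact lies in $K^2_B$, and the boundary identity extends to all of $\mathbb{D}$.

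For (2), I would combine (1) with the reproducing property once more: $\langle C_B k_j, k_i\rangle = \langle \tfrac{B}{z-a_j}, k^B_{a_i}\rangle = \big(\tfrac{B}{z-a_j}\big)(a_i) = \frac{B(a_i)}{a_i-a_j}$. For $i\neq j$ this vanishes because $B(a_i)=0$. The one step needing care — really the only subtlety in the whole lemma — is the diagonal term $i=j$, where this quotient is the indeterminate $0/0$. There I would instead evaluate the holomorphic function $\frac{B(z)}{z-a_j}$ at $z=a_j$ directly, noting that $B(a_j)=0$ with a simple zero forces $\lim_{z\to a_j}\frac{B(z)}{z-a_j} = B'(a_j)$ by the very definition of the derivative. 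This yields $\langle C_B k_j, k_j\rangle = B'(a_j)$ and completes the proof.
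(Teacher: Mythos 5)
Your proof is correct, and it is exactly the ``simple computation'' the paper has in mind: the paper itself gives no argument, only the citation to \cite{CRW} and the remark that the lemma follows from a direct calculation, which your three steps (reproducing property for (3), boundary computation with $\bar z = 1/z$ for (1), and evaluation of the removable singularity $B(z)/(z-a_j)$ at $z=a_j$ for (2)) carry out in the standard way. Your handling of the diagonal case $i=j$ via $\lim_{z\to a_j} B(z)/(z-a_j) = B'(a_j)$ is precisely the right resolution of the only delicate point.
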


\begin{lemma}\label{cchar}
Let $B$ be a finite Blaschke product of degree $N$ with distinct zeros $a_1,\dots, a_N$. Let $C_B$ be the conjugation in $K^2_B$ given by $C_B f(z)=B(z)\overline{zf(z)}$ for $f\in K^2_B$. Assume that an operator $A\in L(K^2_B)$ has a matrix representation $(b_{i,j})_{i,j=1,\dots,N}$ with respect to the basis $\{k_1,\dots,k_N\}$. Then the following are equivalent:
 \begin{enumerate}[(1)]
 \item $A$ is $C_B$--symmetric;
 \item $\langle Ak_i, C_B k_j\rangle=\langle Ak_j,C_B k_i \rangle$ for all $i, j=1,\dots, N$;
 \item $\overline{B'(a_j)}b_{j,i}=\overline{B'(a_i)}b_{i,j}$ for all $i, j=1,\dots, N$.
 \end{enumerate}
\end{lemma}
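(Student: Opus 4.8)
The plan is to establish the chain $(1)\Leftrightarrow(2)$ by an abstract bilinear-form argument and then $(2)\Leftrightarrow(3)$ by a direct computation with the matrix entries, feeding in Lemma \ref{tech}(2).

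For $(1)\Leftrightarrow(2)$ I would introduce the form $[x,y]=\langle x,C_B y\rangle$ on $K^2_B$. Because $C_B$ is an antilinear involution satisfying $\langle C_B f,C_B g\rangle=\langle g,f\rangle$, this form is bilinear (linear in each slot, the antilinearity of $C_B$ cancelling the antilinearity of the inner product in its second argument) and symmetric, since $\langle y,C_B x\rangle=\langle C_B C_B x,C_B y\rangle=\langle x,C_B y\rangle$, i.e.\ $[x,y]=[y,x]$. Next I would note that $A$ is $C_B$--symmetric, $C_BAC_B=A^{*}$, is equivalent to $C_BA=A^{*}C_B$ (multiply by $C_B$ and use $C_B^2=I$), and that this in turn is equivalent to $[Ax,y]=[x,Ay]$ for all $x,y$, because $[Ax,y]=\langle Ax,C_By\rangle=\langle x,A^{*}C_By\rangle$ while $[x,Ay]=\langle x,C_BAy\rangle$. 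Since $[\cdot,\cdot]$ is bilinear and $\{k_1,\dots,k_N\}$ is a basis, it suffices to test this identity on basis pairs, and $[Ak_i,k_j]=[k_i,Ak_j]$ becomes, after applying symmetry of the form to the right-hand side, exactly $\langle Ak_i,C_Bk_j\rangle=\langle Ak_j,C_Bk_i\rangle$, which is $(2)$.

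For $(2)\Leftrightarrow(3)$ I would expand $Ak_i=\sum_{m}b_{m,i}k_m$ and compute
\[
\langle Ak_i,C_Bk_j\rangle=\sum_{m}b_{m,i}\langle k_m,C_Bk_j\rangle .
\]
By Lemma \ref{tech}(2), $\langle k_m,C_Bk_j\rangle=\overline{\langle C_Bk_j,k_m\rangle}$ equals $\overline{B'(a_j)}$ when $m=j$ and vanishes otherwise, so the sum collapses to $\overline{B'(a_j)}\,b_{j,i}$. Symmetrically $\langle Ak_j,C_Bk_i\rangle=\overline{B'(a_i)}\,b_{i,j}$, and equating the two expressions yields precisely $\overline{B'(a_j)}\,b_{j,i}=\overline{B'(a_i)}\,b_{i,j}$, which is $(3)$; the steps are reversible, giving the equivalence.

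The only genuine subtlety, and the point I would flag as the main obstacle, is that $\{k_1,\dots,k_N\}$ is not orthonormal, so the matrix entries $b_{i,j}$ are not inner products $\langle Ak_j,k_i\rangle$ and cannot be read off naively. What rescues the computation is the near-biorthogonality supplied by Lemma \ref{tech}(2): the family $\{C_Bk_j\}$ is orthogonal to $\{k_m\}$ off the diagonal, so pairing $Ak_i$ against $C_Bk_j$ isolates the single coefficient $b_{j,i}$. This is exactly why one pairs against $C_Bk_j$ rather than against $k_j$, and it is the mechanism behind both the basis reduction in $(1)\Leftrightarrow(2)$ and the collapse in $(2)\Leftrightarrow(3)$.
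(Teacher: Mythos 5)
Your proof is correct and takes essentially the same route as the paper: the $(2)\Leftrightarrow(3)$ computation via Lemma \ref{tech}(2) is identical, and your bilinear-form treatment of $(1)\Leftrightarrow(2)$ is just a repackaging of the paper's direct manipulation with the conjugation and the adjoint. If anything, yours is slightly more complete, since the bilinearity argument makes explicit why testing on basis pairs of the (non-orthonormal) basis suffices for $(2)\Rightarrow(1)$ --- a step the paper covers only with ``the reverse implication can be proved similarly.''
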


\begin{proof}
The implication $(1)\Rightarrow (2)$ follows from
\begin{equation*}
\langle Ak_i, C_B k_j\rangle=\langle C_B^2 k_j,C_BAk_i\rangle=\langle k_j, A^* C_B k_i\rangle=\langle Ak_j,C_B k_i\rangle.
\end{equation*} The reverse implication can be proved similarly.

To prove that $(2)\Leftrightarrow (3)$ note that $Ak_i=\sum_{m=1}^N b_{m,i} k_m$. Hence, by Lemma \ref{tech}(2),
$$\langle Ak_i, C_B k_j\rangle=\sum_{m=1}^N b_{m,i}\langle k_m,C_B k_j\rangle=\overline{B'(a_j)} b_{j,i}.$$
Analogously,
$$\langle Ak_j, C_B k_i\rangle=\overline{B'(a_i)} b_{i,j}.$$
\end{proof}

Let $1\leq n\leq N$. Denote by $B_n$  the finite Blaschke product with $n$ distinct zeros $a_1,\dots, a_n$,
\begin{equation}\label{blafin}
B_n(z)=\prod_{j=1}^{n}\frac{z-a_j}{1-\bar {a}_jz},
\end{equation}
and by $C_{n}=C_{B_n}$ the conjugation in $K^2_{B_{n}}$ given by
$$(C_{n}f)(z)=B_n(z) \overline{zf(z)},\quad |z|=1.$$

\begin{theorem}\label{Blaschke}
  Let $B$ be a finite Blaschke product of degree $N$ with distinct zeros $a_1,\dots, a_N$. Denote by $B_n$ the Blaschke product of degree $n$ with zeros $a_1,\dots, a_n$ and by $P_n$ the orthogonal projection from $K^2_{B}$ onto $K^2_{B_n}$ for $n=1,\dots, N$. Let $A\in L(K^2_B)$. The following conditions are equivalent:
  \begin{enumerate}[(1)]
    \item $A\in \mathcal{T}(B)$;
    \item for every Blaschke product $B_\sigma$ dividing $B$ the operator $A_\sigma=P_{B_\sigma} A_{|K^2_{B_\sigma}}$ is $C_{B_\sigma}$--symmetric;
    \item for every $n=1,\dots, N$ the operator $A_n=P_n A_{|K_{B_n}^2}$ is $C_{n}$--symmetric.
  \end{enumerate}
\end{theorem}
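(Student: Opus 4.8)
The plan is to prove the cycle $(1)\Rightarrow(2)\Rightarrow(3)\Rightarrow(1)$. The first implication is the compression argument of Lemma \ref{obciecie}: if $A=A^B_\varphi$ and $B_\sigma$ divides $B$, then $K^2_{B_\sigma}\subseteq K^2_B$, and for $f\in K^2_{B_\sigma}$ one has $P_{B_\sigma}Af=P_{B_\sigma}P_B(\varphi f)=P_{B_\sigma}(\varphi f)$, so $A_\sigma=A^{B_\sigma}_\varphi\in\mathcal{T}(B_\sigma)$ and is $C_{B_\sigma}$--symmetric by \cite[Lemma 2.1]{Sarason}. Since each $B_n$ is a Blaschke product dividing $B$, the implication $(2)\Rightarrow(3)$ is immediate.

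The whole content lies in $(3)\Rightarrow(1)$, which I would prove by induction on the degree $N$. The base case $N=1$ is trivial, since $K^2_{B_1}$ is one--dimensional and $\mathcal{T}(B_1)=L(K^2_{B_1})$. For the inductive step, first observe that the compression $A_{N-1}=P_{N-1}A_{|K^2_{B_{N-1}}}$ itself satisfies hypothesis (3) for $B_{N-1}$: because the orthogonal projections onto the nested spaces $K^2_{B_n}\subseteq K^2_{B_{N-1}}\subseteq K^2_B$ compose, one checks that $(A_{N-1})_n=A_n$ is $C_n$--symmetric for each $n\leqslant N-1$. By the inductive hypothesis $A_{N-1}=A^{B_{N-1}}_\psi$ for some symbol $\psi$. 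Setting $A'=A^B_\psi$, the already--proved direction $(1)\Rightarrow(2)$ gives that the compression of $A'$ to $K^2_{B_{N-1}}$ equals $A^{B_{N-1}}_\psi=A_{N-1}$, so the difference $D=A-A'$ has vanishing compression $D_{N-1}=0$. Moreover $D$ is $C_B$--symmetric, since $A$ is (the case $n=N$ of (3)) and $A'\in\mathcal{T}(B)$ is, and $C_B$--symmetric operators form a linear subspace. Hence it suffices to prove
$$\mathcal{S}=\{D\in L(K^2_B): D_{N-1}=0 \text{ and } C_BDC_B=D^*\}\subseteq\mathcal{T}(B);$$
then $D\in\mathcal{T}(B)$ and $A=A'+D\in\mathcal{T}(B)$.

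The inclusion $\mathcal{S}\subseteq\mathcal{T}(B)$ I would establish by a dimension count. The map $A\mapsto A_{N-1}$ sends $\mathcal{T}(B)$ onto $\mathcal{T}(B_{N-1})$ (the same symbol works), so by rank--nullity its kernel $\mathcal{T}(B)\cap\{D:D_{N-1}=0\}$ has dimension $\dim\mathcal{T}(B)-\dim\mathcal{T}(B_{N-1})=(2N-1)-(2N-3)=2$, using the standard formula $\dim\mathcal{T}(\theta)=2\dim K^2_\theta-1$. By $(1)\Rightarrow(3)$ this kernel is contained in $\mathcal{S}$, so $\dim\mathcal{S}\geqslant 2$. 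For the reverse bound I would write $D$ in the basis $\{k_1,\dots,k_N\}$ via $Dk_i=\sum_m b_{m,i}k_m$ and expand $P_{N-1}k_N=\sum_{l<N}\lambda_lk_l$: the condition $D_{N-1}=0$ forces $b_{l,i}=-\lambda_lb_{N,i}$ for $l,i<N$, while Lemma \ref{cchar} rewrites $C_B$--symmetry as $\overline{B'(a_j)}b_{j,i}=\overline{B'(a_i)}b_{i,j}$. Feeding the first set of relations into the second shows the surviving entries depend on only two free scalars, so $\dim\mathcal{S}\leqslant 2$. Together with the inclusion this yields $\mathcal{S}=\mathcal{T}(B)\cap\{D:D_{N-1}=0\}\subseteq\mathcal{T}(B)$, completing the induction.

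The main obstacle is exactly this last dimension count. One must run the bookkeeping in the \emph{non--orthonormal} reproducing--kernel basis, where the projection $P_{N-1}$ acts nontrivially and the coefficients $\lambda_l$ appear; the delicate points are that the vector $(\lambda_l)$ cannot vanish identically (otherwise $k_N\perp K^2_{B_{N-1}}$, which is impossible since $\langle k_N,k_j\rangle=(1-\bar a_N a_j)^{-1}\ne 0$) and that the off--diagonal $C_B$--symmetry relations force the relevant coefficients to be proportional, collapsing the count to exactly two parameters. An alternative to this inductive argument would be to bypass the induction entirely and instead match the reformulated family of conditions $\langle Ak_i,C_nk_j\rangle=\langle Ak_j,C_nk_i\rangle$ (obtained from Lemma \ref{cchar}(2) together with $C_nk_j\in K^2_{B_n}$ and self--adjointness of $P_n$) against the explicit matrix description of $\mathcal{T}(B)$ from \cite{CRW, GMR, lanucha}; there the obstacle shifts to disentangling the contributions of the different levels $n=1,\dots,N$ appearing in $C_nk_j(z)=B_n(z)/(z-a_j)$.
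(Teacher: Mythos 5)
Your proposal is correct, and its key step is genuinely different from the paper's. The paper proves $(3)\Rightarrow(1)$ by the same induction on the degree, but the inductive step there is a direct computation: using the compression formula of Lemma \ref{macierz} and the symmetry criterion of Lemma \ref{cchar}, the authors grind through the relations between the matrices $(b^{(n)}_{i,j})$ and $(b^{(n-1)}_{i,j})$ until the entries of $A_n$ are shown to satisfy the explicit matrix characterization of $\mathcal{T}(B_n)$ from \cite[Theorem 1.4]{CRW} (base case $N=2$ cited from \cite{Sarason}). You replace this page of algebra by a soft rank--nullity argument: lift the (inductively Toeplitz) compression $A_{N-1}$ to $A'=A^B_\psi\in\mathcal{T}(B)$, note that $D=A-A'$ is $C_B$--symmetric with $D_{N-1}=0$, and show that the space $\mathcal{S}$ of such operators has dimension at most $2$, while the kernel of the surjection $\mathcal{T}(B)\to\mathcal{T}(B_{N-1})$ already supplies a $2$--dimensional subspace of $\mathcal{S}$ inside $\mathcal{T}(B)$; hence $\mathcal{S}\subseteq\mathcal{T}(B)$ and $A=A'+D\in\mathcal{T}(B)$. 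All the ingredients check out: the identity $(A_{N-1})_n=A_n$ follows from $P_nP_{N-1}=P_n$; $C_B$--symmetric operators do form a complex subspace; the compression map is onto $\mathcal{T}(B_{N-1})$ with the same symbol (Lemma \ref{obciecie}); the dimension formula $\dim\mathcal{T}(\theta)=2\dim K^2_\theta-1$ is Sarason's; and your upper bound $\dim\mathcal{S}\leqslant 2$ is sound --- indeed, writing $\mu_l=\overline{B'(a_l)}\lambda_l$, the relations $b_{l,i}=-\lambda_l b_{N,i}$ and $\mu_jb_{N,i}=\mu_ib_{N,j}$ force $b_{N,i}=c\,\mu_i$, and this works even if some (but not all) $\mu_l$ vanished, since those entries are then forced to be $0=c\mu_l$; in fact Lemma \ref{rzut}(2) shows every $\lambda_l\neq 0$, so no degeneracy occurs. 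What each approach buys: yours avoids the \cite{CRW} criterion and the heavy bookkeeping entirely, needing only Lemma \ref{cchar}, the nonvanishing of the projection coefficients, and known dimension counts, so it is shorter and more conceptual; the paper's computation, on the other hand, produces the explicit formulas (Lemmas \ref{rzut} and \ref{macierz}) and lands exactly on the \cite{CRW} matrix condition, both of which are reused in Section 5 for the infinite Blaschke product case --- your argument, being a finite-dimensional dimension count, cannot be run there directly, though since Theorem \ref{Blaschke} enters the proof of Theorem \ref{binf} only as a black box, substituting your proof would not break the rest of the paper. One cosmetic difference: your base case $N=1$ (where $\mathcal{T}(B_1)=L(K^2_{B_1})$) is simpler than the paper's appeal to \cite{Sarason} for $N=2$, and is equally valid since your inductive step already works from $N=1$ to $N=2$.
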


To give the proof of Theorem \ref{Blaschke} we need two technical lemmas.
Firstly, let us observe by \eqref{kr} that $k^{B_n}_{a_j}= k^{B}_{a_j}=k_{j}$ for $1\leqslant n\leqslant N$, $j=1,\dots,n$. Hence  $\{k_1,\dots,k_n\}$ is a basis for $K^2_{B_n}\subset K^2_{B}$.

\begin{lemma}\label{rzut} For $1\leqslant m,n\leqslant N$ the following holds:
\begin{enumerate}[(1)]
\item $\langle C_nk_j,k_m\rangle=\left\{\begin{array}{cl}
0&\text{for } m\leq n, m\neq j,\\
B_n'(a_j)&\text{for } m\leq n, m= j,\\
\frac{B_n(a_m)}{a_m-a_j}&\text{for } m> n,
\end{array}\right.$ \qquad for  $j=1,\dots,n$;

\item $P_{n}k_m=\sum\limits_{j=1}^{n}\frac{\overline{{B_{n}(a_m)}}}{\overline{B_{n}^{\prime}(a_j)}(\bar a_m-\bar a_j)}\ k_j$ \  for  $n<m$;
\item $\frac{B_{n-1}(a_n)}{B_n'(a_n)}=1-|a_n|^2$ \  for $n>1$;
\item $\frac{B'_{n-1}(a_j)}{B_n'(a_j)}=\frac{1-\bar a_n a_j}{a_j-a_n}$ \  for $n>1$, $j=1,\dots,n-1$.
\end{enumerate}
\end{lemma}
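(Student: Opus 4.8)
The plan is to treat the four items in two groups. Items (1)–(2) are statements about the reproducing-kernel and conjugation structure of $K^2_{B_n}$, and I would derive them by transporting Lemma \ref{tech} from $B$ to the divisor $B_n$. Items (3)–(4) are elementary differentiations of the factorization $B_n(z)=B_{n-1}(z)\cdot\frac{z-a_n}{1-\bar a_n z}$, and I would obtain them by a direct computation.

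For (1), since $k_j=k^{B_n}_{a_j}$ for $j\le n$, parts (1) and (2) of Lemma \ref{tech} applied to $B_n$ give at once $(C_nk_j)(z)=B_n(z)/(z-a_j)$ together with $\langle C_nk_j,k_m\rangle=0$ for $m\le n,\ m\ne j$ and $\langle C_nk_j,k_j\rangle=B_n'(a_j)$; these are exactly the first two cases. For the third case $m>n$ the key observation is that $C_nk_j\in K^2_{B_n}\subset K^2_B$, while $k_m=k^B_{a_m}$ is the reproducing kernel of the \emph{larger} space $K^2_B$; hence the reproducing property yields $\langle C_nk_j,k_m\rangle=(C_nk_j)(a_m)=B_n(a_m)/(a_m-a_j)$, as claimed.

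For (2), I would expand $P_nk_m=\sum_{j=1}^n c_j k_j$ in the basis $\{k_1,\dots,k_n\}$ of $K^2_{B_n}$ and isolate the coefficients by pairing against $C_nk_i$. Using that $P_n$ is self-adjoint and $P_nC_nk_i=C_nk_i$ (because $C_nk_i\in K^2_{B_n}$), one gets $\langle P_nk_m,C_nk_i\rangle=\langle k_m,C_nk_i\rangle$. By Lemma \ref{tech}(2) the family $\{C_nk_i\}_{i\le n}$ is, up to the scalars $B_n'(a_i)$, biorthogonal to $\{k_j\}_{j\le n}$, namely $\langle k_j,C_nk_i\rangle=\overline{B_n'(a_i)}\,\delta_{ij}$, so the left side collapses to $c_i\overline{B_n'(a_i)}$. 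Combining this with the third case of (1) (conjugated) gives $c_i\overline{B_n'(a_i)}=\overline{B_n(a_m)}/(\bar a_m-\bar a_i)$, i.e. the stated formula. The one mild subtlety here, and the step I expect to be the crux, is precisely recognizing that $\{C_nk_i\}$ serves as the dual system to $\{k_j\}$, so that pairing with $C_nk_i$ singles out one coefficient.

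Finally, for (3) and (4) I would differentiate $B_n=B_{n-1}\cdot\frac{z-a_n}{1-\bar a_n z}$, using $\frac{d}{dz}\frac{z-a_n}{1-\bar a_n z}=\frac{1-|a_n|^2}{(1-\bar a_n z)^2}$. Evaluating $B_n'$ at $z=a_n$ annihilates the term carrying $B_{n-1}'$ and leaves $B_n'(a_n)=B_{n-1}(a_n)/(1-|a_n|^2)$, which is (3); evaluating at $z=a_j$ with $j\le n-1$ annihilates the term carrying $B_{n-1}(a_j)=0$ and leaves $B_n'(a_j)=B_{n-1}'(a_j)\cdot\frac{a_j-a_n}{1-\bar a_n a_j}$, which is (4). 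These last two are pure calculus and present no obstacle.
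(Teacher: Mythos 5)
Your proposal is correct and follows essentially the same route as the paper: parts (1)--(2) are obtained by applying Lemma \ref{tech} to the divisor $B_n$, using that $C_nk_j\in K^2_{B_n}\subset K^2_B$ together with the reproducing property of $k_m$ in the larger space for $m>n$, and then exploiting the biorthogonality $\langle k_j,C_nk_i\rangle=\overline{B_n'(a_i)}\,\delta_{ij}$ to extract the coefficients of $P_nk_m$ (the paper pairs $C_nk_j$ against $P_nk_m$ rather than the reverse, which is the same computation); parts (3)--(4) are the same differentiation of $B_n=B_{n-1}\cdot\frac{z-a_n}{1-\bar a_nz}$ evaluated at $a_n$ and at $a_j$.
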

\begin{proof}
To show (1) note that $C_nk_j\in K^2_{B_n}\subset K^2_B$ for $1\leq n\leq N$, $j=1,\ldots,n$, and that
$$(C_nk_j)(z)=\frac{B_n(z)}{z-a_j}$$
by Lemma \ref{tech}(1). If $m>n$, then the reproducing property of $k_m$ yields
$$\langle C_nk_j,k_m\rangle=(C_nk_j)(a_m)=\frac{B_n(a_m)}{a_m-a_j}.$$
On the other hand, if $m\leq n$, then it follows from Lemma \ref{tech}(2) that
\begin{displaymath}
\langle C_nk_j,k_m\rangle=\left\{\begin{array}{cc}
0&\mathrm{for}\ m\neq j,\\
B_n'(a_j)&\mathrm{for}\ m= j.
\end{array}\right.
\end{displaymath}

To show (2) assume that $m>n$ and $P_{n}k_m=\sum\limits_{l=1}^{n}d_lk_l$. Then, by part (1), for $j=1,\ldots,n$,
\begin{displaymath}
\frac{B_n(a_m)}{a_m-a_j}=\langle C_{n}k_j,k_m\rangle=\langle C_{n}k_j,P_nk_m\rangle=\sum_{l=1}^{n}\bar d_l\langle C_{n}k_j, k_l\rangle=B'_{n}(a_j)\bar d_j.
\end{displaymath}
Hence
$$d_j=\frac{\overline{{B_{n}(a_m)}}}{\overline{B_{n}^{\prime}( a_j)}(\bar a_m-\bar a_j)},$$ which proves (2).
The statements (3) and (4) follow directly from
$$B_n'(z)=B_{n-1}'(z)\frac{z-a_n}{1-\bar a_n z}+B_{n-1}(z)\frac{1-|a_n|^2}{(1-\bar a_nz)^2}.$$
\end{proof}

\begin{lemma}\label{macierz}
Let $A\in L(K^2_{B_n})$ have a matrix representation $(b_{i,j}^{(n)})_{i,j=1,\dots,n}$ with respect to the basis $\{k_1,\dots,k_n\}$. Then $A_{n-1}=P_{n-1}A_{|K^2_{B_{n-1}}}$ has a matrix representation $(b_{i,j}^{(n-1)})_{i,j=1,\dots,n-1}$, $$b_{i,j}^{(n-1)}=b_{i,j}^{(n)}+\frac{\overline{B_{n-1}(a_n)}b_{n,j}^{(n)}}{\overline{B'_{n-1}(a_i)}(\bar a_n-\bar a_i)},$$ with respect to the basis $\{k_1,\dots,k_{n-1}\}$.
\end{lemma}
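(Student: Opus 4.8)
The plan is to compute the action of $A_{n-1}$ directly on the basis vectors $k_1,\dots,k_{n-1}$ and simply read off its matrix. First I would expand $Ak_j=\sum_{m=1}^n b_{m,j}^{(n)}k_m$ for each $j=1,\dots,n-1$ using the given matrix representation of $A$, and then apply the projection, so that
$$A_{n-1}k_j=P_{n-1}Ak_j=\sum_{m=1}^n b_{m,j}^{(n)}\,P_{n-1}k_m.$$
The key structural observation is that $P_{n-1}$ acts trivially on almost all of these terms: since $k_m=k^{B_{n-1}}_{a_m}\in K^2_{B_{n-1}}$ for $m\leqslant n-1$ (as noted just before Lemma \ref{rzut}), we have $P_{n-1}k_m=k_m$ for every $m\leqslant n-1$, and only the single term with $m=n$ is genuinely moved by the projection.

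Thus the entire computation reduces to evaluating $P_{n-1}k_n$, which is precisely the content of Lemma \ref{rzut}(2) applied to the projection onto $K^2_{B_{n-1}}$, i.e.\ with $n$ replaced by $n-1$ and $m$ replaced by $n$ (legitimate since $n-1<n$). This yields
$$P_{n-1}k_n=\sum_{i=1}^{n-1}\frac{\overline{B_{n-1}(a_n)}}{\overline{B_{n-1}'(a_i)}\,(\bar a_n-\bar a_i)}\,k_i.$$

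Substituting this back, I would write $A_{n-1}k_j=\sum_{i=1}^{n-1}b_{i,j}^{(n)}k_i+b_{n,j}^{(n)}\,P_{n-1}k_n$ and collect the coefficient of each $k_i$. That coefficient is exactly $b_{i,j}^{(n)}+\frac{\overline{B_{n-1}(a_n)}\,b_{n,j}^{(n)}}{\overline{B_{n-1}'(a_i)}\,(\bar a_n-\bar a_i)}$, which is the claimed expression for $b_{i,j}^{(n-1)}$. There is no serious obstacle here beyond careful bookkeeping; the only two points deserving attention are the correct index shift when invoking Lemma \ref{rzut}(2), and the fact that $B_{n-1}'(a_i)\neq 0$ so that the denominators are meaningful, which holds because $B_{n-1}$ is a finite Blaschke product with distinct zeros.
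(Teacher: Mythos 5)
Your proof is correct and takes essentially the same route as the paper: both expand $Ak_j=\sum_{m=1}^{n}b_{m,j}^{(n)}k_m$, use that $P_{n-1}$ fixes $k_m$ for $m\leqslant n-1$, and apply Lemma \ref{rzut}(2) (with the same index shift) to compute $P_{n-1}k_n$. The paper merely adds a redundant verification step, re-extracting the coefficients via the pairing $\langle \cdot\,,C_{n-1}k_i\rangle$ from Lemma \ref{rzut}(1), whereas you read them off directly from the unique expansion in the basis $\{k_1,\dots,k_{n-1}\}$, which is equally valid.
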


\begin{proof}
Note that by Lemma \ref{rzut}(2),
$$P_{n-1}k_n=\sum\limits_{m=1}^{n-1}\frac{\overline{{B_{n-1}(a_n)}}}{\overline{B_{n-1}^{\prime}(a_m)}(\bar a_n-\bar a_m)}\ k_m.$$
Hence, for $j=1,\ldots,n-1$, we have
\begin{displaymath}
\begin{split}
P_{n-1}(A k_j)&=P_{n-1}\left(\sum_{m=1}^{n}b_{m,j}^{(n)}k_m\right)=P_{n-1}\left(\sum_{m=1}^{n-1}b_{m,j}^{(n)}k_m\right)+P_{n-1}b_{n,j}^{(n)}k_n\\
&=\sum_{m=1}^{n-1}\left(b_{m,j}^{(n)}+\tfrac{\overline{{B_{n-1}(a_n)}}b_{n,j}^{(n)}}{\overline{B_{n-1}^{\prime}( a_m)}(\bar a_n-\bar a_m)}\right)k_m.
\end{split}
\end{displaymath}
Since
$$b_{i,j}^{(n-1)}=\frac{1}{\overline{B_{n-1}'(a_i)}}\langle P_{n-1}(Ak_j), C_{n-1}k_i\rangle,\quad 1\leq i,j\leq n-1,$$
we get
\begin{displaymath}
\begin{split}
b_{i,j}^{(n-1)}&=\frac{1}{\overline{B_{n-1}'(a_i)}}\sum_{m=1}^{n-1}\left(b_{m,j}^{(n)}+\tfrac{\overline{{B_{n-1}(a_n)}}b_{n,j}^{(n)}}{\overline{B_{n-1}^{\prime}( a_i)}(\bar a_n-\bar a_m)}\right)\langle k_m, C_{n-1}k_i\rangle\\
&=b_{i,j}^{(n)}+\tfrac{\overline{{B_{n-1}(a_n)}}b_{n,j}^{(n)}}{\overline{B_{n-1}^{\prime}( a_i)}(\bar a_n-\bar a_i)}
\end{split}
\end{displaymath}
by Lemma \ref{rzut}(1).
\end{proof}

\begin{proof}[Proof of Theorem \ref{Blaschke}]
Since multiplying $B$ by a constant of modulus $1$ does not change $K_B^2$, we can assume without any loss of generality that $B$ is given by \eqref{bla} with $\gamma=0$, that is, $B=B_N$.

The implication $(1)\Rightarrow(2)$ follows form Lemma \ref{obciecie} and the implication $(2)\Rightarrow(3)$ is obvious. We only need to prove the implication $(3)\Rightarrow (1)$.
This  will be proved by induction. Note firstly that it is true for $N=2$ by \cite[p. 505]{Sarason}.

Assume now that the assertion is true for $n-1<N$, which means that $A_{n-1}=P_{n-1}A_{|K^2_{B_{n-1}}}$ is Toeplitz and has a matrix representation   $( b_{i,j}^{(n-1)})_{i,j=1,\dots,n-1}$ with respect to the basis $\{k_1,\dots,k_{n-1}\}$ satisfying
\begin{equation}\label{toepl}
b_{i,j}^{(n-1)}=\frac{\overline{B'_{n-1}(a_1)}}{\overline{B'_{n-1}(a_i)}}\left(\frac{ b_{1,i}^{(n-1)}(\bar a_1-\bar a_i)+ b_{1,j}^{(n-1)}(\bar a_j-\bar a_1)}{\bar a_j-\bar a_i}\right),
\end{equation}
 for  $ 1\leqslant i, j \leqslant n-1$, $i\ne j$, by \cite[Theorem 1.4]{CRW}. Assume also that $A\in L(K^2_{B_n})$ is $C_{n}$--symmetric and has a matrix representation $(b_{i,j}^{(n)})_{i,j=1,\dots,n}$ with respect to the basis $\{k_1,\dots,k_n\}$. We will show that $A$ is Toeplitz, i.e.,  $b_{i,j}^{(n)}$ satisfies  \cite[Theorem 1.4]{CRW}:
 \begin{equation*}
b_{i,j}^{(n)}=\frac{\overline{B'_{n}(a_1)}}{\overline{B'_{n}(a_i)}}\left(\frac{ b_{1,i}^{(n)}(\bar a_1-\bar a_i)+ b_{1,j}^{(n)}(\bar a_j-\bar a_1)}{\bar a_j-\bar a_i}\right),
\end{equation*}
 for  $ 1\leqslant i, j \leqslant n$, $i\ne j$.

 Since $A_{n-1}$ is $C_{n-1}$--symmetric, for $i, j=1,\dots,n-1$ we have, by Lemma \ref{cchar} and Lemma \ref{macierz},
 \begin{equation}\label{eq11}
 b_{j,i}^{(n-1)}=\frac{\overline{B'_{n-1}(a_i)}}{\overline{B'_{n-1}(a_j)}}b_{i,j}^{(n-1)}
=\frac{\overline{B'_{n-1}(a_i)}}{\overline{B'_{n-1}(a_j)}}\left(b_{i,j}^{(n)}+\frac{\overline{B_{n-1}(a_n)}b_{n,j}^{(n)}}{\overline{B'_{n-1}(a_i)}(\bar a_n-\bar a_i)}\right).
\end{equation}
 On the other hand, by Lemma \ref{macierz} and using the $C_n$--symmetry of $A$,
 \begin{equation}\label{eq12}
 \begin{split}
b_{j,i}^{(n-1)}&=b_{j,i}^{(n)}+\frac{\overline{B_{n-1}(a_n)}}{\overline{B'_{n-1}(a_j)}} \frac{b_{n,i}^{(n)}}{\bar a_n-\bar a_j}\\
&=\frac{\overline{B_n'(a_i)}}{\overline{B_n'(a_j)}}b_{i,j}^{(n)}+\frac{\overline{B_n'(a_i)}}{\overline{B_n'(a_n)}}\frac{\overline{B_{n-1}(a_n)}}{\overline{B'_{n-1}(a_j)}} \frac{b_{i,n}^{(n)}}{\bar a_n-\bar a_j}.
\end{split}
 \end{equation}
 Comparing \eqref{eq11} with \eqref{eq12} and putting $i=1$ we obtain
\[
 \frac{\overline{B_{n-1}(a_n)}}{\overline{B'_{n-1}(a_j)}}\frac{b_{n,j}^{(n)}}{\bar a_n-\bar a_1}
 =\frac{\overline{B_n'(a_1)}}{\overline{B_n'(a_n)}}\frac{\overline{B_{n-1}(a_n)}}{\overline{B'_{n-1}(a_j)}}\frac{b_{1,n}^{(n)}}{\bar a_n-\bar a_j}+\left(\frac{\overline{B_n'(a_1)}}{\overline{B_n'(a_j)}}-\frac{\overline{B'_{n-1}(a_1)}}{\overline{B'_{n-1}(a_j)}}\right)b_{1,j}^{(n)}.
\]
 Hence
 \begin{equation}\label{czdz}
 b_{n,j}^{(n)}=\frac{\overline{B_n'(a_1)}}{\overline{B_n'(a_n)}}\frac{b_{1,n}^{(n)}(\bar a_n-\bar a_1)}{\bar a_n-\bar a_j}
 +\left(\frac{\overline{B_n'(a_1)}}{\overline{B_{n-1}(a_n)}}\frac{\overline{B'_{n-1}(a_j)}}{\overline{B_n'(a_j)}}-\frac{\overline{B'_{n-1}(a_1)}}{\overline{B_{n-1}(a_n)}} \right)b_{1,j}^{(n)}(\bar a_n-\bar a_1).
 \end{equation}
 Using Lemma \ref{rzut} we can simplify
\begin{displaymath}
\begin{split}
\frac{\overline{B_n'(a_1)}}{\overline{B_{n-1}(a_n)}}&\frac{\overline{B'_{n-1}(a_j)}}{\overline{B_n'(a_j)}}-\frac{\overline{B'_{n-1}(a_1)}}{\overline{B_{n-1}(a_n)}}\\
&= \frac{\overline{B_n'(a_1)}}{\overline{B_n'(a_n)}}\frac{1}{1-|a_n|^2}\left(\frac{1-a_n\bar a_j}{\bar a_j-\bar a_n}+\frac{1-a_n\bar a_1}{\bar a_n-\bar a_1} \right)\\
&=  \frac{\overline{B_n'(a_1)}}{\overline{B_n'(a_n)}}\frac{(\bar a_j-\bar a_1)}{(\bar a_j-\bar a_n)(\bar a_n-\bar a_1)},
\end{split}
\end{displaymath}
  which together with \eqref{czdz} gives
 \begin{equation}\label{ntoep}
 b_{n,j}^{(n)}= \frac{\overline{B_n'(a_1)}}{\overline{B_n'(a_n)}}\left(\frac{b_{1,n}^{(n)}(\bar a_1-\bar a_n)+b_{1,j}^{(n)}(\bar a_j-\bar a_1)}{\bar a_j-\bar a_n}\right)
 \end{equation}
for $1\leqslant j\leqslant n-1$.
From \eqref{ntoep}, the $C_n$--symmetry of $A_n$ and Lemma \ref{cchar} we also get
 \begin{equation}\label{ntoep2}
 b_{i,n}^{(n)}= \frac{\overline{B_n'(a_n)}}{\overline{B_n'(a_i)}}b_{n,i}^{(n)}= \frac{\overline{B_n'(a_1)}}{\overline{B_n'(a_i)}}\left(\frac{b_{1,i}^{(n)}(\bar a_1-\bar a_i)+b_{1,n}^{(n)}(\bar a_n-\bar a_1)}{\bar a_n-\bar a_i}\right)
 \end{equation}
for $1\leqslant i\leqslant n-1$.
 By Lemma \ref{cchar} and Lemma \ref{macierz}, we have for $i=1,\ldots, n-1 $,
\begin{equation*}
b_{1,i}^{(n-1)}=\frac{\overline{B'_{n-1}(a_i)}}{\overline{B'_{n-1}(a_1)}}b_{i,1}^{(n-1)}=\frac{\overline{B'_{n-1}(a_i)}}{\overline{B'_{n-1}(a_1)}}\left(b_{i,1}^{(n)}+\frac{\overline{B_{n-1}(a_n)}}{\overline{B'_{n-1}(a_i)}}\frac{b_{n,1}^{(n)}}{\bar a_n-\bar a_i} \right).
\end{equation*}
Using Lemma \ref{cchar} again we obtain
\begin{equation}\label{eq42}
b_{1,i}^{(n-1)}=\frac{\overline{B_n'(a_1)}}{\overline{B_n'(a_i)}}\frac{\overline{B'_{n-1}(a_i)}}{\overline{B'_{n-1}(a_1)}}b_{1,i}^{(n)}+\frac{\overline{B_n'(a_1)}}{\overline{B_n'(a_n)}}\frac{\overline{B_{n-1}(a_n)}}{\overline{B'_{n-1}(a_1)}}\frac{b_{1,n}^{(n)}}{\bar a_n-\bar a_i}.
\end{equation}
Now applying Lemma \ref{macierz} to the left--hand side of \eqref{toepl}, and formula \eqref{eq42} to the right--hand side of \eqref{toepl} we can calculate for all $i,j=1,\dots,n-1$,
\begin{align}\label{eq43}
b_{i,j}^{(n)}&+\frac{\overline{B_{n-1}(a_n)}}{\overline{B'_{n-1}(a_i)}}\frac{b_{n,j}^{(n)}}{\bar a_n-\bar a_i}\notag\\
&=\frac{\overline{B_n'(a_1)}}{\overline{B_n'(a_i)}}\frac{\bar a_1-\bar a_i}{\bar a_j-\bar a_i}b_{1,i}^{(n)}+\frac{\overline{B_n'(a_1)}}{\overline{B_n'(a_n)}}\frac{\overline{B_{n-1}(a_n)}}{\overline{B'_{n-1}(a_i)}}\frac{(\bar a_1-\bar a_i)b_{1,n}^{(n)}}{(\bar a_n-\bar a_i)(\bar a_j-\bar a_i)}\\
&\quad+\frac{\overline{B_n'(a_1)}}{\overline{B_n'(a_j)}}\frac{\overline{B'_{n-1}(a_j)}}{\overline{B'_{n-1}(a_i)}}\frac{\bar a_j-\bar a_1}{\bar a_j-\bar a_i}b_{1,j}^{(n)}
+\frac{\overline{B_n'(a_1)}}{\overline{B_n'(a_n)}}\frac{\overline{B_{n-1}(a_n)}}{\overline{B'_{n-1}(a_i)}}\frac{\bar a_j-\bar a_1}{(\bar a_n-\bar a_j)(\bar a_j-\bar a_i)}b_{1,n}^{(n)}.\notag
\end{align}
Note that
\begin{multline*}
\frac{\overline{B_n'(a_1)}}{\overline{B_n'(a_j)}}\frac{\overline{B'_{n-1}(a_j)}}{\overline{B'_{n-1}(a_i)}}\frac{\bar a_j-\bar a_1}{\bar a_j-\bar a_i}b_{1,j}^{(n)}\\
=\frac{\overline{B_n'(a_1)}}{\overline{B_n'(a_i)}}\frac{\bar a_j-\bar a_1}{\bar a_j-\bar a_i}b_{1,j}^{(n)}+\frac{\overline{B_{n-1}(a_n)}}{\overline{B'_{n-1}(a_i)}}\frac{\overline{B_n'(a_1)}}{\overline{B_n'(a_n)}}\frac{\bar a_j-\bar a_1}{(\bar a_n-\bar a_i)(\bar a_j-\bar a_n)}b_{1,j}^{(n)}
\end{multline*}
by Lemma \ref{rzut}. Moreover,
\begin{equation}\label{eq44}
\frac{1}{\bar a_j-\bar a_i}\left(\frac{\bar a_1-\bar a_i}{\bar a_n-\bar a_i}+\frac{\bar a_j-\bar a_1}{\bar a_n-\bar a_j} \right)=\frac{\bar a_n-\bar a_1}{(\bar a_n-\bar a_i)(\bar a_n-\bar a_j)}.
\end{equation}
Hence, \eqref{eq43} and \eqref{eq44} give
\begin{equation*}
\begin{split}
b_{i,j}^{(n)}&+\frac{\overline{B_{n-1}(a_n)}}{\overline{B'_{n-1}(a_i)}}\frac{b_{n,j}^{(n)}}{\bar a_n-\bar a_i}
=\frac{\overline{B_n'(a_1)}}{\overline{B_n'(a_i)}}\frac{b_{1,i}^{(n)}(\bar a_1-\bar a_i)+b_{1,j}^{(n)}(\bar a_j-\bar a_1)}{\bar a_j-\bar a_i}\\
&+\frac{\overline{B_{n-1}(a_n)}}{\overline{B'_{n-1}(a_i)}}\frac{1}{\bar a_n-\bar a_i}\frac{\overline{B_n'(a_1)}}{\overline{B_n'(a_n)}}\frac{b_{1,n}^{(n)}(\bar a_1-\bar a_n)+b_{1,j}^{(n)}(\bar a_j-\bar a_1)}{\bar a_j-\bar a_n}.
\end{split}
\end{equation*}
Taking into account \eqref{ntoep} and \eqref{ntoep2}, the above equation implies that
$$b_{i,j}^{(n)}=\frac{\overline{B_n'(a_1)}}{\overline{B_n'(a_i)}}\left(\frac{b_{1,i}^{(n)}(\bar a_1-\bar a_i)+b_{1,j}^{(n)}(\bar a_j-\bar a_1)}{\bar a_j-\bar a_i}\right)$$
for all $1\leqslant i,j\leqslant n$, $i\ne j$, which completes the proof.
\end{proof}

\section{An infinite Blaschke product with uniformly separated zeros}

Let $B$ be an infinite Blaschke product,
\begin{equation}\label{inff}
B(z)=e^{i\gamma}\prod_{j=1}^{\infty}\frac{\bar {a}_j}{|a_j|}\frac{a_j-z}{1-\bar {a}_jz},\quad \gamma\in\mathbb{R},
\end{equation}
 (if $a_j=0$, then $\bar {a}_j/|a_j|$ is interpreted as $-1$) with uniformly separated zeros $a_1,a_2,\ldots$, i.e.,
\begin{equation}\label{delta}
\inf_{n}\prod_{j\neq n}\left|\frac{a_j-a_n}{1-\bar {a}_ja_n}\right|\geq \delta
\end{equation}
for some $\delta>0$. In particular, the zeros $\{a_j\}_{j=1}^{\infty}$ are distinct. As before, $B_n$, $n\in\mathbb{N}$, denotes the finite Blaschke product with zeros $a_1,\ldots,a_n$, given by \eqref{blafin}.

\begin{theorem}\label{binf}
Let $B$ be an infinite Blaschke product with uniformly separated
zeros $\{a_j\}_{j=1}^{\infty}$. Denote by $B_n$ the Blaschke product of degree $n$ with distinct zeros $\{a_1,\ldots,a_n\}$ and by $P_n$ the orthogonal projection form $K^2_B$ onto $K^2_{B_n}$ for $n\in\mathbb{N}$. Let $A\in L(K^2_B)$. The following conditions are equivalent:
\begin{itemize}
\item[(1)] $A\in\mathcal{T}(B)$;
\item[(2)] for every Blaschke product $B_{\sigma}$ dividing $B$ the operator $A_{\sigma}=P_{B_{\sigma}}A_{|K^2_{B_{\sigma}}}$ is $C_{B_{\sigma}}$--symmetric;
\item[(3)] for every $n\in\mathbb{N}$ the operator $A_{n}=P_{n}A_{|K^2_{B_{n}}}$ is $C_{n}$--symmetric.
\end{itemize}
\end{theorem}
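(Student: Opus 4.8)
The plan is to follow the scheme of Theorem~\ref{Blaschke}: the implication $(1)\Rightarrow(2)$ is again a direct consequence of Lemma~\ref{obciecie} (if $A=A^B_\varphi$ and $B_\sigma$ divides $B$, then $P_{B_\sigma}A_{|K^2_{B_\sigma}}=A^{B_\sigma}_\varphi$ is $C_{B_\sigma}$--symmetric), and $(2)\Rightarrow(3)$ is immediate since each $B_n$ divides $B$. All the work is in $(3)\Rightarrow(1)$, and the new feature compared with the finite case is that we must pass from an exhausting sequence of finite compressions to the operator $A$ itself on an infinite--dimensional space; this is precisely where the uniform separation \eqref{delta} of the zeros enters.

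First I would reduce $(3)$ to the finite theorem. For $m\leqslant n$ the nesting $K^2_{B_m}\subset K^2_{B_n}\subset K^2_B$ gives $P_mP_n=P_m$, hence $(A_n)_m=P_m(A_n)_{|K^2_{B_m}}=A_m$. Thus under $(3)$ every compression $A_n$ satisfies condition $(3)$ of Theorem~\ref{Blaschke} on $K^2_{B_n}$, so $A_n\in\mathcal{T}(B_n)$ for each $n$. In particular, writing $(b^{(n)}_{i,j})_{i,j=1}^{n}$ for the matrix of $A_n$ in the basis $\{k_1,\dots,k_n\}$, the matrix characterization of \cite[Theorem~1.4]{CRW}, that is the analogue of \eqref{toepl} for the Blaschke product $B_n$, holds for $(b^{(n)}_{i,j})$ for every $n$.

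The heart of the argument is a limiting procedure. Set $b_{i,j}=\overline{B'(a_i)}^{\,-1}\langle Ak_j,C_Bk_i\rangle$; since $\langle k_m,C_Bk_i\rangle=\overline{B'(a_i)}\,\delta_{im}$ (a computation as in Lemma~\ref{tech}(2), valid for the infinite product because $a_m$ is a zero of $B$ for $m\neq i$), these are the genuine matrix entries of $A$ in the Riesz basis given by the normalized reproducing kernels, which \eqref{delta} furnishes. The entries $b^{(n)}_{i,j}=\overline{B_n'(a_i)}^{\,-1}\langle Ak_j,C_nk_i\rangle$ converge to $b_{i,j}$ as $n\to\infty$: indeed $B_n'(a_i)\to B'(a_i)\neq0$ because $(1-|a_i|^2)|B'(a_i)|\geqslant\delta$ by \eqref{delta}, while testing against the reproducing kernels $k^B_w$ gives $\langle C_nk_i,k^B_w\rangle=B_n(w)/(w-a_i)\to B(w)/(w-a_i)=\langle C_Bk_i,k^B_w\rangle$; together with the uniform bound $\|C_nk_i\|=\|k_i\|$ this yields $C_nk_i\rightharpoonup C_Bk_i$ weakly in $K^2_B$, hence $\langle Ak_j,C_nk_i\rangle\to\langle Ak_j,C_Bk_i\rangle$. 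Fixing $i\neq j$ and letting $n\to\infty$ in the relation of \cite[Theorem~1.4]{CRW} for $B_n$ then produces
\begin{equation*}
b_{i,j}=\frac{\overline{B'(a_1)}}{\overline{B'(a_i)}}\left(\frac{b_{1,i}(\bar a_1-\bar a_i)+b_{1,j}(\bar a_j-\bar a_1)}{\bar a_j-\bar a_i}\right),\qquad i\neq j.
\end{equation*}
By the matrix characterization of truncated Toeplitz operators on $K^2_B$ for uniformly separated zeros \cite{lanucha}, this relation together with the boundedness of $A$ gives $A\in\mathcal{T}(B)$.

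I expect the main obstacle to be the convergence step $b^{(n)}_{i,j}\to b_{i,j}$ and, more precisely, the justification that the finite relations may be passed to the limit inside the infinite--dimensional characterization. One must know that \eqref{delta} makes the normalized kernels $\{k_j/\|k_j\|\}$ a Riesz basis, so that the numbers $b_{i,j}$ genuinely describe $A$ and the characterization of \cite{lanucha} is applicable; boundedness of $A$ is what turns the weak convergence $C_nk_i\rightharpoonup C_Bk_i$ into convergence of the pairings; and the uniform separation is essential both for the nonvanishing of $B'(a_i)$ and for the validity of the limiting matrix characterization itself.
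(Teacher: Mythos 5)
Your overall strategy coincides with the paper's: reduce to Theorem \ref{Blaschke} via the nesting $P_mP_n=P_m$, invoke the matrix description of $\mathcal{T}(B_N)$ from \cite{CRW} for each finite compression, pass to the limit entrywise, and conclude with the matrix characterization from \cite{lanucha}. The reduction step and the endgame are correct. However, your convergence step contains a genuine error. The finite products $B_n$ of \eqref{blafin} are \emph{not} the partial products of the infinite product $B$ in \eqref{inff}: the partial products are $\lambda_nB_n$, where $\lambda_n=(-1)^n\prod_{j=1}^{n}\bar{a}_j/|a_j|$ are unimodular constants which in general do not converge (their presence in \eqref{inff} is exactly what makes the infinite product converge). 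Consequently both of your claims, $B_n'(a_i)\to B'(a_i)$ and $\langle C_nk_i,k^B_w\rangle=B_n(w)/(w-a_i)\to B(w)/(w-a_i)$, are false. Already for zeros $a_j\in(0,1)$ one has $\lambda_n=(-1)^n$, so $B_n(w)$ and $B_n'(a_i)$ oscillate in sign and $C_nk_i$ does not converge weakly to $C_Bk_i$; what is true is that $\lambda_nB_n\to B$ and $\lambda_nB_n'\to B'$ uniformly on compact subsets of $\mathbb{D}$.

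Your conclusion $b^{(n)}_{i,j}\to b_{i,j}$ nevertheless survives, because the two errors cancel: writing
$$b^{(n)}_{i,j}=\frac{\langle Ak_j,C_nk_i\rangle}{\overline{B_n'(a_i)}}=\frac{\langle Ak_j,\lambda_nC_nk_i\rangle}{\overline{\lambda_nB_n'(a_i)}},$$
your weak-convergence device (kernel testing plus the uniform bound $\|\lambda_nC_nk_i\|=\|k_i\|$) applies verbatim to the \emph{normalized} sequence, giving $\lambda_nC_nk_i\rightharpoonup C_Bk_i$, while the denominator now converges to $\overline{B'(a_i)}\neq 0$; likewise the derivative ratios $\overline{B_N'(a_1)}/\overline{B_N'(a_i)}$ appearing in the relation from \cite{CRW} are unchanged by multiplying by $\lambda_N$, so they do converge to $\overline{B'(a_1)}/\overline{B'(a_i)}$. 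This normalization is precisely what the paper carries out in Corollary \ref{granica}, where the same limit is obtained by splitting $Ak_j$ into a finite sum plus a norm-small tail $r_n$ and estimating $\langle r_n,\lambda_nC_nk_i\rangle$ --- both devices work once the constants $\lambda_n$ are inserted. As written, though, your proof asserts two false limits whose cancellation is never observed, so this step needs the repair above before the argument is complete.
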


Again, before we give the proof some preparations are necessary.
Clearly, $K^2_{B_n}\subset K_B^2$ for all $n\in\mathbb{N}$ and $k_{a_j}^B=k_j$ for all $j\in\mathbb{N}$. Condition \eqref{delta} implies that the reproducing kernels $k_j$, $j\in\mathbb{N}$, form a basis for $K^2_B$ (for more details see \cite[Chapter 12]{GMR}, \cite{GP} or \cite{Nikolski}). In particular, every $f\in K^2_B$ can be written as
$$f=\sum_{j=1}^{\infty}\frac{\langle f,C_B k_j \rangle}{\overline{B'(a_j)}}k_j,$$
where the series converges in the  norm.

\begin{lemma}\label{bij}
Let $A\in L(K_B^2)$ have a matrix representation $(b_{i,j})_{i,j=1}^{\infty}$ with respect to the basis $\{{k}_{i}\colon i\in\mathbb{N}\}$. Then $A_{n}=P_{n}A_{|K^2_{B_{n}}}$ has a matrix representation $(b_{i,j}^{(n)})_{i,j=1,\ldots,n}$,
\begin{displaymath}
b_{i,j}^{(n)}=b_{i,j}+\sum_{m=n+1}^{\infty}\frac{\overline{B_n(a_m)}b_{m,j}}{\overline{B_n'(a_i)}(\bar {a}_m-\bar {a}_i)},
\end{displaymath}
with respect to the basis $\{{k}_{1},\ldots, {k}_{n}\}$.
\end{lemma}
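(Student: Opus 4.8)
The plan is to read off each entry $b_{i,j}^{(n)}$ by pairing $A_nk_j$ against the biorthogonal system dual to $\{k_1,\dots,k_n\}$ in $K^2_{B_n}$, and then to expand $Ak_j$ in the norm--convergent basis $\{k_m\}$ of $K^2_B$. First I would note that, exactly as in the proofs of Lemma \ref{cchar} and Lemma \ref{macierz}, Lemma \ref{tech}(2) applied to $B_n$ gives $\langle k_l,C_nk_i\rangle=\overline{B_n'(a_i)}$ for $l=i$ and $0$ for $l\neq i$ (with $1\le l,i\le n$); hence for $1\le i,j\le n$
$$b_{i,j}^{(n)}=\frac{1}{\overline{B_n'(a_i)}}\,\langle A_nk_j,C_nk_i\rangle.$$
Since $k_j\in K^2_{B_n}$ we have $A_nk_j=P_nAk_j$, and because $P_n$ is the orthogonal projection onto $K^2_{B_n}$ while $C_nk_i\in K^2_{B_n}$, self--adjointness of $P_n$ yields $\langle A_nk_j,C_nk_i\rangle=\langle Ak_j,C_nk_i\rangle$.

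Next I would insert the expansion $Ak_j=\sum_{m=1}^\infty b_{m,j}k_m$, which converges in norm because $\{k_m\}$ is a basis of $K^2_B$ (this is where the uniform separation \eqref{delta} enters). As $\langle\,\cdot\,,C_nk_i\rangle$ is a bounded linear functional, it passes inside the series, so $\langle Ak_j,C_nk_i\rangle=\sum_{m=1}^\infty b_{m,j}\langle k_m,C_nk_i\rangle$. The inner products are then evaluated in two regimes. For $m\le n$ the value is $\overline{B_n'(a_i)}$ when $m=i$ and $0$ otherwise, again by Lemma \ref{tech}(2) for $B_n$. For $m>n$ the kernel $k_m=k^B_{a_m}$ still has the reproducing property in $K^2_B$, and $C_nk_i\in K^2_{B_n}\subset K^2_B$ with $(C_nk_i)(z)=B_n(z)/(z-a_i)$ by Lemma \ref{tech}(1) for $B_n$; therefore
$$\langle k_m,C_nk_i\rangle=\overline{\langle C_nk_i,k_m\rangle}=\overline{(C_nk_i)(a_m)}=\frac{\overline{B_n(a_m)}}{\bar a_m-\bar a_i}.$$
Combining these contributions and dividing by $\overline{B_n'(a_i)}$ produces precisely the claimed formula for $b_{i,j}^{(n)}$.

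The computation is essentially bookkeeping; the one point requiring care is the interchange of the functional $\langle\,\cdot\,,C_nk_i\rangle$ with the infinite sum, which rests on the norm convergence of the basis expansion of $Ak_j$ guaranteed by \eqref{delta}, and the correct splitting of $\langle k_m,C_nk_i\rangle$ according to whether $a_m$ is one of the zeros defining $B_n$ (so that $k_m\in K^2_{B_n}$) or not. I would be careful throughout to use the \emph{finite} conjugation $C_n=C_{B_n}$ and the derivative $B_n'$, rather than the quantities attached to the full product $B$. An equivalent route would be to compute $P_nk_m$ for $m>n$ via the analogue of Lemma \ref{rzut}(2) and collect coefficients of $k_i$, but the direct pairing above avoids rearranging a double series and is therefore cleaner.
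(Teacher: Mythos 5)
Your proposal is correct and follows essentially the same route as the paper: both read off $b_{i,j}^{(n)}$ via the pairing $b_{i,j}^{(n)}=\overline{B_n'(a_i)}^{-1}\langle A_nk_j,C_nk_i\rangle$, replace $A_nk_j$ by $Ak_j$ using $C_nk_i\in K^2_{B_n}$, expand $Ak_j=\sum_m b_{m,j}k_m$ in norm, and split the sum at $m=n$ using the two regimes of $\langle k_m,C_nk_i\rangle$ (the paper packages these evaluations as Lemma \ref{rzut}(1), which is itself just Lemma \ref{tech} plus the reproducing property, exactly as you argue). Your explicit justification of the self-adjointness step and of passing the functional through the series only makes explicit what the paper leaves implicit.
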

\begin{proof}
Let $n\in\mathbb{N}$ and $1\leq i,j\leq n$. Since
$$A_nk_j=\sum_{m=1}^{n}b_{m,j}^{(n)}k_m,$$
Lemma \ref{rzut}(1) gives
$$b_{i,j}^{(n)}=\frac{1}{\overline{B_n'(a_i)}}\langle A_nk_j, C_nk_i\rangle.$$
Since
$$Ak_j=\sum_{m=1}^{\infty}b_{m,j}k_m,$$
and the series converges in norm, we get
\begin{displaymath}
\begin{split}
b_{i,j}^{(n)}
&=\frac{1}{\overline{B_n'(a_i)}}\langle Ak_j, C_nk_i\rangle
=\frac{1}{\overline{B_n'(a_i)}}\sum_{m=1}^{\infty}b_{m,j}\langle k_m, C_nk_i\rangle\\
&=b_{i,j}+\frac{1}{\overline{B_n'(a_i)}}\sum_{m=n+1}^{\infty}\frac{\overline{B_n(a_m)}}{\bar {a}_m-\bar {a}_i}b_{m,j}
\end{split}
\end{displaymath}
by Lemma \ref{rzut}(1).
\end{proof}

\begin{corollary}\label{granica}
For all $i,j\in\mathbb{N}$,
\begin{displaymath}
b_{i,j}=\lim_{n\rightarrow\infty}b_{i,j}^{(n)}.
\end{displaymath}
\end{corollary}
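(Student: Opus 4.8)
The plan is to express both $b_{i,j}$ and each $b_{i,j}^{(n)}$ as a quotient of two inner products in which the only $n$-dependence sits in a single vector, and then to reduce the whole statement to the norm convergence $C_nk_i\to C_Bk_i$ in $K^2_B$.

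First I would record the two representations. Applying the biorthogonal expansion quoted before Lemma \ref{bij} to $f=Ak_j$ gives $b_{i,j}=\langle Ak_j,C_Bk_i\rangle/\overline{B'(a_i)}$, while the proof of Lemma \ref{bij} yields $b_{i,j}^{(n)}=\langle Ak_j,C_nk_i\rangle/\overline{B_n'(a_i)}$ for $n\geq\max\{i,j\}$. Since $B_n(a_i)=B(a_i)=0$, the reproducing property gives $B_n'(a_i)=(C_nk_i)(a_i)=\langle C_nk_i,k_i\rangle$ and likewise $B'(a_i)=\langle C_Bk_i,k_i\rangle$, so $\overline{B_n'(a_i)}=\langle k_i,C_nk_i\rangle$. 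Hence $b_{i,j}^{(n)}=\langle Ak_j,C_nk_i\rangle/\langle k_i,C_nk_i\rangle$ and $b_{i,j}=\langle Ak_j,C_Bk_i\rangle/\langle k_i,C_Bk_i\rangle$, and only the vector $C_nk_i$ varies with $n$.

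The heart of the argument is the claim $\|C_nk_i-C_Bk_i\|\to0$. By Lemma \ref{tech}(1) (the computation is identical for the infinite product) one has $(C_nk_i)(z)=B_n(z)/(z-a_i)$ and $(C_Bk_i)(z)=B(z)/(z-a_i)$, so, using $|z-a_i|\geq1-|a_i|$ on $\mathbb{T}$, I would estimate $\|C_nk_i-C_Bk_i\|^2\leq(1-|a_i|)^{-2}\|B_n-B\|_{L^2}^2$. Factoring $B=B_n\tilde B_n$ with $\tilde B_n$ the inner function carrying the remaining zeros and using that multiplication by the inner function $B_n$ is isometric on $H^2$, I get $\|B_n-B\|_{2}=\|1-\tilde B_n\|_{2}$, and $\|1-\tilde B_n\|_{2}^2=2(1-\operatorname{Re}\tilde B_n(0))$. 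As $\tilde B_n(0)\to1$ (the tail of the convergent Blaschke product), this tends to $0$.

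Finally I would pass to the limit: since $Ak_j$ and $k_i$ are fixed and the inner product is continuous, $\langle Ak_j,C_nk_i\rangle\to\langle Ak_j,C_Bk_i\rangle$ and $\langle k_i,C_nk_i\rangle\to\langle k_i,C_Bk_i\rangle=\overline{B'(a_i)}\neq0$ (the zero $a_i$ being simple), so dividing gives $b_{i,j}^{(n)}\to b_{i,j}$. I expect the main obstacle to be exactly the norm-convergence step: what is needed is genuine $L^2(\mathbb{T})$ convergence of the partial Blaschke products rather than mere locally uniform convergence inside $\mathbb{D}$, and it is the tail-product identity above that supplies it. Worth noting is that uniform separation plays no role in this particular corollary beyond guaranteeing that $\{k_i\}$ is a basis and that the zeros are simple.
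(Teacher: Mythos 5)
Your reduction of the corollary to a single convergence claim is structurally sound: the formulas $b_{i,j}=\langle Ak_j,C_Bk_i\rangle/\overline{B'(a_i)}$ and $b_{i,j}^{(n)}=\langle Ak_j,C_nk_i\rangle/\overline{B_n'(a_i)}=\langle Ak_j,C_nk_i\rangle/\langle k_i,C_nk_i\rangle$ are correct, and the $n$-dependence indeed sits only in $C_nk_i$. But the key claim $\|C_nk_i-C_Bk_i\|\to 0$ is false in general, and the error occurs exactly where you factor $B=B_n\tilde B_n$ and assert $\tilde B_n(0)\to 1$. In this paper $B_n(z)=\prod_{j\le n}\frac{z-a_j}{1-\bar a_jz}$ carries no normalizing constants, whereas the convergent infinite product must: $B=\lambda_nB_n\tilde B_n^{\mathrm{nor}}$, where $\lambda_n=(-1)^n\prod_{j\le n}\bar a_j/|a_j|$ and $\tilde B_n^{\mathrm{nor}}$ is the normalized tail. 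Hence the inner function $B/B_n$ (your $\tilde B_n$) has $(B/B_n)(0)=\lambda_n\prod_{j>n}|a_j|$: its modulus tends to $1$, but its argument is $\arg\lambda_n$, which need not converge at all. Concretely, take $a_j=ir_j$ with $r_j=1-2^{-j}$ (a uniformly separated sequence rotated to the imaginary axis); then $\lambda_n=i^n$, so along $n\equiv 2\ (\mathrm{mod}\ 4)$ one has $B_n\approx-B$ in $L^2(\mathbb{T})$, hence $C_nk_i\approx-C_Bk_i$ and $B_n'(a_i)\approx-B'(a_i)$. Neither $C_nk_i$ nor $\langle k_i,C_nk_i\rangle=\overline{B_n'(a_i)}$ converges, so both your ``heart of the argument'' and your final division step fail as written.

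The gap is repairable, and the repair is precisely the device the paper uses. Your quotient is invariant under multiplying $C_nk_i$ by any unimodular constant, since $\langle Ak_j,\lambda_nC_nk_i\rangle/\langle k_i,\lambda_nC_nk_i\rangle=\bar\lambda_n\langle Ak_j,C_nk_i\rangle/\bigl(\bar\lambda_n\langle k_i,C_nk_i\rangle\bigr)=b_{i,j}^{(n)}$. For the corrected vectors $\lambda_nC_nk_i$ your tail computation becomes valid: $\overline{\lambda_n}B/B_n$ is the normalized tail, whose value at $0$ is $\prod_{j>n}|a_j|\to 1$, so $\|\lambda_nB_n-B\|_2^2=2\bigl(1-\prod_{j>n}|a_j|\bigr)\to 0$ and thus $\|\lambda_nC_nk_i-C_Bk_i\|\le(1-|a_i|)^{-1}\|\lambda_nB_n-B\|_2\to 0$; continuity of the inner product then also gives $\overline{\lambda_nB_n'(a_i)}=\langle k_i,\lambda_nC_nk_i\rangle\to\overline{B'(a_i)}\ne 0$, and the limit follows. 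This corrected argument is close in spirit to, though not identical with, the paper's proof: the paper introduces the same constants $\lambda_n$, writes $Ak_j=\sum_{m\le n}b_{m,j}k_m+r_n$, and obtains $b_{i,j}^{(n)}-b_{i,j}=\langle r_n,\lambda_nC_nk_i\rangle/\overline{\lambda_nB_n'(a_i)}\to 0$ from $\|r_n\|\to 0$, boundedness of $\lambda_nC_nk_i$, and locally uniform convergence $\lambda_nB_n'\to B'$; your route, once fixed, replaces the remainder estimate by genuine $L^2(\mathbb{T})$ convergence of the (renormalized) partial products, which is a nice self-contained alternative. The moral in both cases is the same: the unimodular factors $\lambda_n$ must be inserted before any limit is taken.
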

\begin{proof}
It is known that the infinite Blaschke product $B$ converges uniformly on compact subsets of $\mathbb{D}$. It follows that if $$\lambda_n=(-1)^n\prod_{j=1}^{n}\frac{\bar {a}_j}{|a_j|},\quad n\in\mathbb{N},$$
then $\lambda_nB_n\rightarrow B$ and $\lambda_nB_n'\rightarrow B'$ as $n\rightarrow\infty$ (uniformly on compact subsets of $\mathbb{D}$). In particular,
$$\lambda_nB_n'(a_i)\rightarrow B'(a_i)\quad\mathrm{as}\quad n\rightarrow\infty$$
for each $i\in\mathbb{N}$.
Fix $i,j\in\mathbb{N}$. Let $n\geq \max\{i,j\}$ and write
$$Ak_j=\sum_{m=1}^{n}b_{m,j}k_m+r_n, \quad \text{ where }\quad r_n=\sum_{m=n+1}^{\infty}b_{m,j}k_m.$$
As in the proof of Lemma \ref{bij},
\begin{displaymath}
\begin{split}
b_{i,j}^{(n)}&
=b_{i,j}+\tfrac{1}{\overline{B_n'(a_i)}}\langle r_n, C_nk_i\rangle
=b_{i,j}+\tfrac{1}{\overline{\lambda_nB_n'(a_i)}}\langle r_n, \lambda_nC_nk_i\rangle,
\end{split}
\end{displaymath}
where the last equality follows form the fact that $\lambda_n\in\mathbb{T}$.
Since $r_n$ tends to zero in the norm, the sequence $(\lambda_nC_nk_i)_{n\geq i}$ is bounded and $\lambda_nB_n'(a_i)\rightarrow B'(a_i)$, we get
\begin{equation*}\lim_{n\rightarrow\infty}b_{i,j}^{(n)}=b_{i,j}+\lim_{n\rightarrow\infty}\left(\tfrac{1}{\overline{\lambda_nB_n'(a_i)}}\langle r_n, \lambda_nC_nk_i\rangle\right)= b_{i,j}.\qedhere\end{equation*}
\end{proof}
\begin{proof}[Proof of Theorem \ref{binf}]
As in the proof of Theorem \ref{Blaschke}, without loss of generality, assume that $B$ is given by \eqref{inff} with $\gamma=0$.
The implication $(1)\Rightarrow(2)$ follows from Lemma \ref{obciecie} and the implication $(2)\Rightarrow(3)$ is obvious. We only need to prove $(3)\Rightarrow(1)$.

Let $A\in L(K^2_B)$ and assume that $A_{n}=P_{n}A_{|K^2_{B_{n}}}$ is $C_{n}$--symmetric for every $n\in\mathbb{N}$. By \cite[Remark 2.4]{lanucha}, to prove that $A\in\mathcal{T}(B)$ it is enough to show that
\begin{equation}\label{toep01}
b_{i,j}=\frac{\overline{B'(a_1)}}{\overline{B'(a_i)}}\left(\frac{b_{1,i}(\bar {a}_1-\bar {a}_i)+b_{1,j}(\bar {a}_j-\bar {a}_1)}{\bar {a}_j-\bar {a}_i}\right)
\end{equation}
for all $i\neq j$, where $(b_{i,j})_{i,j=1}^{\infty}$ is the matrix representation of $A$ with respect to the basis $\{{k}_{i}\colon i\in\mathbb{N}\}$.
Fix $i,j\in\mathbb{N}$, $i\neq j$, and take an arbitrary $N\geq\max\{i,j\}$. By (3), $P_{n}A_{N|K^2_{B_{n}}}=A_n$ is $C_{n}$--symmetric for all $n=1,\ldots,N$. Hence Theorem \ref{Blaschke} implies that $A_N\in\mathcal{T}(B_N)$. By \cite[Theorem 4.1]{CRW},
\begin{equation}\label{toep02}
b_{i,j}^{(N)}=\frac{\overline{B_N'(a_1)}}{\overline{B_N'(a_i)}}\left(\frac{b_{1,i}^{(N)}(\bar {a}_1-\bar {a}_i)+b_{1,j}^{(N)}(\bar {a}_j-\bar {a}_1)}{\bar {a}_j-\bar {a}_i}\right),
\end{equation}
where $(b_{i,j}^{(N)})_{i,j=1,\ldots,N}$ is the matrix representation of the operator $A_N$ with respect to the basis $\{{k}_{1},\ldots,k_N\}$. Taking the limit in \eqref{toep02} as $N$ tends to infinity we get \eqref{toep01} because $b_{i,j}^{(N)}\rightarrow b_{i,j}$ and $B_{N}'(a_i)\rightarrow B'(a_i)$ by Corollary \ref{granica} and its proof.
\end{proof}

\section{Conjecture}
Theorems \ref{toep}, \ref{Blaschke}, \ref{binf} and Proposition \ref{b1} suggest that the following conjecture can be true:
\begin{conjecture}
Let $\theta$ be a nonconstant inner function, and let $A\in L(K^2_\theta)$. Then $A\in \mathcal{T}(\theta)$ if and only if for every nonconstant inner function $\alpha$ dividing $\theta$ the operator $A_\alpha=P_\alpha A_{|\kda}$ is $C_\alpha$--symmetric.
\end{conjecture}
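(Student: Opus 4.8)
The plan is to prove the two implications separately. The forward implication---that membership in $\mathcal{T}(\theta)$ forces every $A_\alpha=P_\alpha A_{|K^2_\alpha}$ to be $C_\alpha$-symmetric---requires no hypothesis on $\theta$ and is exactly the argument of Lemma \ref{obciecie}: for any nonconstant inner $\alpha\leqslant\theta$ we have $K^2_\alpha\subseteq K^2_\theta$, and the compression of a truncated Toeplitz operator is again truncated Toeplitz on $K^2_\alpha$ (because $P_\alpha A^\theta_{\varphi|K^2_\alpha}=A^\alpha_\varphi$), hence $C_\alpha$-symmetric by \cite{Sarason}. So the entire difficulty is the converse.

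For the converse I would abandon matrix entries, which are unavailable for a general $\theta$, in favour of Sarason's structural characterization \cite{Sarason}: $A\in\mathcal{T}(\theta)$ precisely when the defect $A-S_\theta A S_\theta^{*}$ has rank at most two and is of the form
\[
A-S_\theta A S_\theta^{*}=k_0^\theta\otimes\varphi+\psi\otimes k_0^\theta,\qquad \varphi,\psi\in K^2_\theta,
\]
where $S_\theta$ is the compressed shift and $k_0^\theta$ the reproducing kernel at the origin. The goal is to extract this single defect identity from the whole family of $C_\alpha$-symmetries. First I would factor $\theta=B\,s$ into its Blaschke part $B$ and singular part $s=s_\mu$; the Blaschke divisors are, at least for distinct and uniformly separated zeros, already handled by Theorems \ref{Blaschke} and \ref{binf}, so the genuinely new content is carried by the singular divisors $s_\nu$, $\nu\leqslant\mu$, which form a continuum rather than a discrete chain.

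The main obstacle lies entirely in this singular part, and it is structural rather than computational. The divisors $s_\nu$ with $\nu\leqslant\mu$ possess no smallest nonconstant member, so the induction of Theorem \ref{Blaschke} (which bottoms out at the two-dimensional case) has no base case, and the finite-section limiting argument of Theorem \ref{binf} has nothing finite-dimensional to approximate. Worse, each $K^2_{s_\nu}$ is itself the model space of a singular inner function, so knowing that $A_{s_\nu}$ is $C_{s_\nu}$-symmetric does \emph{not} let us conclude $A_{s_\nu}\in\mathcal{T}(s_\nu)$---that is the very assertion under proof---and a naive descent becomes circular. Since a singular inner function has no zeros in $\mathbb{D}$, the reproducing-kernel basis $\{k_j\}$ and the matrix recursions (Lemmas \ref{macierz} and \ref{bij}) that drove every earlier case simply do not exist. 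A viable attack would instead run along a one-parameter chain such as $s_t=\exp\!\big(t\tfrac{z+1}{z-1}\big)$, $0<t\leqslant1$, whose spaces $K^2_{s_t}$ increase to $K^2_\theta$, and would try to differentiate the $C_{s_t}$-symmetry identities in $t$ so as to convert this continuum of constraints directly into Sarason's defect condition. Finding the correct infinitesimal form of $C_\alpha$-symmetry, together with a substitute for the vanished kernel basis, is the step I expect to be decisive---precisely the new idea the authors flag as missing.
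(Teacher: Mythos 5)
First, be aware that the statement you were given is a \emph{conjecture}: the paper does not prove it. The authors support it only with a three-dimensional example ($B(z)=z^2\frac{w-z}{1-\overline{w}z}$) and state explicitly that even for the simplest singular inner function $\theta(z)=\exp\left(\frac{z+1}{z-1}\right)$ no such description is known and that a different approach is probably needed. So there is no proof in the paper to compare against, and your submission, by its own admission, is not a proof either. Your forward implication is correct and is exactly Lemma \ref{obciecie} together with the fact that $K^2_\alpha\subset K^2_\theta$ for $\alpha\leqslant\theta$; but the converse --- the entire content of the conjecture --- is left as a strategy sketch whose decisive step you yourself flag as missing. That is the genuine gap.

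Beyond the admitted incompleteness, two concrete points in your plan would need repair even as a plan. First, the claim that the Blaschke divisors are ``already handled'' by Theorems \ref{Blaschke} and \ref{binf} overstates what the paper proves: those theorems require distinct zeros (finite case) or uniformly separated zeros (infinite case, condition \eqref{delta}); together with Proposition \ref{b1} (a single repeated zero), this still leaves out finite Blaschke products with several repeated zeros and infinite Blaschke products whose zeros are not uniformly separated, so even the ``Blaschke part'' of the conjecture is open. Second, and more seriously, even if the Blaschke factor $B$ and the singular factor $s$ of $\theta=Bs$ were each handled, you offer no mechanism for recombining them: the divisors of $\theta$ include mixed products $B_\sigma s_\nu$, and Sarason's defect characterization of $\mathcal{T}(\theta)$, which you propose as the target identity, does not decompose along the factorization $\theta=Bs$; knowing that the compressions to $K^2_{B_\sigma}$ and to $K^2_{s_\nu}$ separately are truncated Toeplitz does not obviously yield the single rank-two defect identity on $K^2_\theta$. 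Your idea of differentiating the $C_{s_t}$-symmetry along the chain $s_t=\exp\left(t\frac{z+1}{z-1}\right)$ is a reasonable research direction, and it matches the authors' own diagnosis of where the new idea is needed, but until the infinitesimal form of $C_\alpha$-symmetry is actually formulated and shown to force Sarason's condition, nothing has been proved.
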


The following example supports the conjecture.

\begin{example}
 Consider $$B(z)=z^2\frac{w-z}{1-\overline{w}z},\quad \mathrm{where}\quad w\neq 0.$$ Then the space $K_B^2$ has dimension $3$ and the set $\{1,z,\frac{z^2}{\|k_w\|}k_w\}$, $k_w(z)=(1-\overline{w}z)^{-1}$, is an orthonormal basis for $K_B^2$.

We first describe the operators form $\mathcal{T}(B)$ in terms of their matrix representations with respect to the basis $\{1,z,\frac{z^2}{\|k_w\|}k_w\}$.
Let $A_{\varphi}^B$, $\varphi\in L^2$, be an operator from $\mathcal{T}(B)$, and let $M_{A_{\varphi}^B}=(b_{i,j})$ be its matrix representation. By \cite[Theorem 3.1]{Sarason} we can assume that $\varphi\in \overline{BH^2}+BH^2$, namely, that
$$\varphi=c_{-2}{\tfrac{\bar z^2}{\|k_w\|}\bar k_w}+c_{-1}\overline{z}+c_{0}+c_{1}z+c_{2}\tfrac{z^2}{\|k_w\|}k_w.$$
It is now a matter of a simple computation to see that the matrix $M_{A_{\varphi}^B}=(b_{i,j})$ is given by
$$\left(\begin{array}{ccc}
c_{0}    &c_{-1}   &c_{-2}\\
c_{1}    &c_{0}   &c_{-2}\overline{w}+\frac{c_{-1}}{\|k_w\|}\\
c_{2}    &\,\ \,\frac{c_{1}}{\|k_w\|}+c_{2}w   &\,\ \,c_{-2}\overline{w}^2\|k_w\|+c_{-1}\overline{w}+c_{0}+c_{1}w+c_{2}w^2\|k_w\|
\end{array}\right).$$
From this, the elements $b_{i,j}$ are described by the following system of equations
\begin{align}\label{eqo1}
b_{2,2}&=b_{1,1}\\ \label{eqo2}
b_{2,3}&=\overline{w}b_{1,3}+\|k_w\|^{-1}b_{1,2}\\ \label{eqo3}
b_{3,2}&=\|k_w\|^{-1}b_{2,1}+wb_{3,1}\\  \label{eqo4}
b_{3,3}&=b_{1,1}+\overline{w}\|k_w\|b_{2,3}+w\|k_w\|b_{3,2}\\
\phantom{b_{3,3}}&=b_{1,1}+\overline{w}^2\|k_w\|b_{1,3}+\overline{w}b_{1,2}+wb_{2,1}+w^2\|k_w\|b_{3,1}.\notag
\end{align}
Clearly, each $3\times3$ matrix $(b_{i,j})$ satisfying \eqref{eqo1}--\eqref{eqo4} is determined by five elements (the first row and the first column) and the space $\mathcal{M}_B$ of all such matrices has dimension $5$. As matrices representing operators from $\mathcal{T}(B)$ have to belong to $\mathcal{M}_B$ and the dimension of $\mathcal{T}(B)$ in this case is also $5$, we conclude that a linear operator $A$ form $K_B^2$ into $K_B^2$ belongs to $\mathcal{T}(B)$ if and only if its matrix representation with respect to $\{1,z,\frac{z^2}{\|k_w\|}k_w\}$ satisfies \eqref{eqo1}--\eqref{eqo4}.

Now let $A$ be an operator from $K_B^2$ into $K^2_B$ such that for every $B_{\sigma}\leq B$ the compression $A_{\sigma}=P_{B_{\sigma}}A_{|B_{\sigma}}$ is $C_{B_{\sigma}}$-symmetric. Using the above characterization we show that $A$ must belong to $\mathcal{T}(B)$.
Let $M_A=(b_{i,j})$ be the matrix representation of $A$ with respect to the basis $\{1,z,\frac{z^2}{\|k_w\|}k_w\}$. Our goal is to show that $(b_{i,j})$ satisfies \eqref{eqo1}--\eqref{eqo4}.


Let $B_1(z)=z^2$ and $A_{1}=P_{B_{1}}A_{|B_{1}}$. Then the space $K^2_{B_1}$ is spanned by $\{1,z\}$,
$$C_{B_1}1=z,\quad C_{B_1}z=1,$$
and the $C_{B_1}$-symmetry of $A_1$ gives \eqref{eqo1}.

Let $B_2(z)=z\frac{w-z}{1-\overline{w}z}$ and $A_{2}=P_{B_{2}}A_{|B_{2}}$. Then the space $K^2_{B_2}$ is spanned by $\{1,k_w\}$,
$$C_{B_2}1=\tfrac{w-z}{1-\overline{w}z},\quad \mathrm{and}\quad C_{B_2}k_w=-zk_w.$$
Moreover, we have
$$C_Bz=\tfrac{w-z}{1-\overline{w}z}=C_{B_2}1,\quad C_B(z^2k_w)=-k_w,$$
and
$$\overline{w}b_{1,3}+\|k_w\|^{-1}b_{1,2}= \|k_w\|^{-1}\langle A\left(\overline{w}z^2k_w+z\right), 1\rangle.$$
 Since
 $$\overline{w}z^2k_w+z=zk_w$$
and $A$, $A_2$ are symmetric with respect to $C_B$ and $C_{B_2}$, respectively,
we obtain \eqref{eqo2}. Namely,
\begin{displaymath}
\begin{split}
\overline{w}b_{1,3}+\|k_w\|^{-1}b_{1,2}&= \|k_w\|^{-1}\langle A\left(zk_w\right), 1\rangle=-\|k_w\|^{-1}\langle A_2C_{B_2}k_w, 1\rangle\\
&=-\|k_w\|^{-1}\langle C_{B_2}A_2^{*}k_w, 1\rangle=-\|k_w\|^{-1}\langle A_2C_{B_2}1,k_w\rangle\\
&=\|k_w\|^{-1}\langle AC_{B}z,C_B(z^2k_w)\rangle=\|k_w\|^{-1}\langle C_{B}A^{*}z,C_B(z^2k_w)\rangle\\
&=\|k_w\|^{-1}\langle A(z^2k_w),z\rangle=b_{2,3}.
\end{split}
\end{displaymath}
Similarly we can obtain \eqref{eqo3}.


To get \eqref{eqo4} firstly, by using $C_B$-symmetry of $A$, we have
\begin{displaymath}
\begin{split}
b_{3,3}&=\|k_w\|^{-2}\langle A(z^2k_w),z^2k_w\rangle\\
&=\|k_w\|^{-2}\langle AC_Bk_w,C_Bk_w\rangle=\|k_w\|^{-2}\langle Ak_w,k_w\rangle.
\end{split}
\end{displaymath}
From this
\begin{displaymath}
\begin{split}
b_{3,3}-b_{1,1}&=(1-|w|^2)\langle Ak_w,k_w\rangle-\langle A1,1\rangle=\langle A\left(1-\overline{w}\tfrac{w-z}{1-\overline{w}z}\right),k_w\rangle-\langle A1,1\rangle\\
&=\langle A1,k_w-1\rangle-\overline{w}\langle A\left(\tfrac{w-z}{1-\overline{w}z}\right),k_w\rangle
=w\langle A1,zk_w\rangle+\overline{w}\langle AC_Bz,C_B(z^2k_w)\rangle\\
&=-w\langle A_21,C_{B_2}k_w\rangle+\overline{w}\langle A(z^2k_w),z\rangle
=-w\langle A_2k_w,C_{B_2}1\rangle+\overline{w}\|k_w\|b_{2,3}\\
&=w\langle AC_B(z^2k_w),C_{B}z\rangle+\overline{w}\|k_w\|b_{2,3}
=w\langle Az,z^2k_w\rangle+\overline{w}\|k_w\|b_{2,3}\\
&=w\|k_w\|b_{3,2}+\overline{w}\|k_w\|b_{2,3},
\end{split}
\end{displaymath}
which completes the proof.

\end{example}

\end{document}